\newtheorem{theorem}{Theorem}
\newtheorem{lemma}[subsection]{{\bf Lemma}}
\newtheorem{coro}[subsection]{{\bf Corollary}}
\newcommand{\al}{\alpha}
\newcommand{\om}{\omega}
\newcommand{\del}{\delta}
\newcommand{\Z}{\mbox{$\mathbb Z$}}
\begin{document}
\title{Irreducibility of generalized Hermite-Laguerre Polynomials} 
\author[S. Laishram]{Shanta Laishram}
\address{Stat-Math Unit, India Statistical Institute\\
7, S. J. S. Sansanwal Marg, New Delhi, 110016, India}
\email{shanta@isid.ac.in}
\author[T. N. Shorey]{T. N. Shorey}
\address{Department of Mathematics\\
Indian Institute of Technology Bombay, Powai, Mumbai 400076, India}
\email{shorey@math.iitb.ac.in}
\thanks{2000 Mathematics Subject Classification: Primary 11A41, 11B25, 11N05, 11N13, 11C08, 11Z05.\\
Keywords: Irreducibility, Hermite-Laguerre Polynomials, Arithmetic Progressions, Primes.}
\maketitle
\pagenumbering{arabic}
\pagestyle{headings}

\section{Introduction}

Let $n$ and $1\le \al<d$ be positive integers with gcd$(\al, d)=1$.
Any positive rational $q$ is of the form $q=u+\frac{\al}{d}$ where $u$ is
a non-negative integer. For integers $a_0, a_1, \cdots a_n$, let
\begin{align*}
G(x):=G_q(x)=&a_nx^n+a_{n-1}(\al +(n-1+u)d)x^{n-1}+\cdots + \\
&a_1\left(\prod^{n-1}_{i=1}(\al +(i+u)d)\right)x+a_0
\left(\prod^{n-1}_{i=0}(\al +(i+u)d)\right).
\end{align*}
This is an extension of Hermite polynomials and generalized Laguerre polynomials. Therefore we call
$G(x)$ the generalized Hermite-Laguerre polynomial. For an integer $\nu >1$, we denote by
$P(\nu)$ the the greatest prime factor of $\nu$ and we put $P(1)=1$. We prove

\begin{theorem}\label{1/3}
Let $P(a_0a_n)\leq 3$ and suppose $2\nmid a_0a_n$ if degree of $G_{\frac{2}{3}}(x)$ is $43$.
Then $G_{\frac{1}{3}}$ and $G_{\frac{2}{3}}$ are irreducible except possibly when
$1+3(n-1)$ and $2+3(n-1)$ is a power of $2$, respectively where it can be  a product
of a linear factor times a polynomial of degree $n-1$.
\end{theorem}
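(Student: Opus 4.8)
The plan is to show that any nontrivial factorization of $G_{1/3}$ or $G_{2/3}$ over $\Q$ (equivalently, by Gauss's Lemma, over $\Z$) must be into a linear factor times a factor of degree $n-1$, and that such a split can occur only when the top term of the underlying progression is a power of $2$. I write $G=\sum_{j=0}^{n}c_jx^j$ with $c_j=a_j\prod_{i=j}^{n-1}(\al+(i+u)d)$, put $\Delta_i:=\al+(i+u)d$ (here $u=0$, $d=3$) and $b_j:=\prod_{i=j}^{n-1}\Delta_i$, so that $c_j=a_jb_j$ and $v_p(b_j)=\sum_{i=j}^{n-1}v_p(\Delta_i)$. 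If $G=VW$ with $\deg V=t$ and $\deg W=n-t$, then the smaller of $t,n-t$ is at most $n/2$; so it suffices to rule out a factor of degree $t$ for every $2\le t\le n/2$, and to prove that a linear factor survives only in the exceptional situation.

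The algebraic core is a Newton-polygon (Dumas) argument. Fix $2\le t\le n/2$ and suppose $V$ has degree $t$. Let $p\ge n$ be a prime dividing some $\Delta_{i_0}$ with $i_0\ge n-t$. Since $P(a_0a_n)\le 3<p$ and $p\nmid d=3$, we get $p\nmid a_0a_n$; as $p>n-1$ exceeds the span of the indices, $p$ divides exactly one $\Delta_i$, namely $\Delta_{i_0}$, and $\Delta_{i_0}\le 3n-1<p^{2}$ forces $v_p(\Delta_{i_0})=1$. Hence $v_p(c_j)=v_p(a_j)+1$ for $j\le i_0$ and $v_p(c_j)=v_p(a_j)$ for $j>i_0$, with $v_p(c_0)=1$ and $v_p(c_n)=0$. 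Letting $j_1$ be the least index $>i_0$ with $p\nmid a_{j_1}$ (so $j_1\le n$), every intermediate point lies strictly above the segment from $(0,1)$ to $(j_1,0)$, so the $p$-adic Newton polygon of $G$ has this single edge of slope $-1/j_1$ with $j_1>i_0\ge n-t$. The roots over this edge have valuation $1/j_1$ and are totally ramified, so any factor of $G$ over $\Q_p$ meeting them has degree divisible by $j_1\ge n-t+1$. But $j_1>n-t\ge t$, so no such factor fits inside $V$ (degree $t$) or $W$ (degree $n-t$): a contradiction.

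Thus a factor of degree $t$ with $2\le t\le n/2$ can exist only if $\prod_{i=n-t}^{n-1}\Delta_i$ has no prime factor $\ge n$, and the whole problem reduces to the arithmetic assertion that this product always does have one. Since $\Delta_{n-1}$ lies in every such window, the cleanest sufficient condition is that $\Delta_{n-1}=3n-2$ (resp. $3n-1$) itself has a prime factor $\ge n$, which settles all $t\ge 1$ simultaneously. When it does not — so, as $\Delta_{n-1}\le 3n-1$ and $3\nmid\Delta_{n-1}$, $\Delta_{n-1}$ is neither a prime $\ge n$ nor twice one — I would instead locate a prime $\ge n$ dividing one of the remaining top terms. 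For $t$ of order $n$ the window is an interval of length $\asymp n$ below $3n$, and the existence of a prime $p\equiv\al\pmod 3$ equal to (or half of) a term follows from an effective prime number theorem for the progressions $1,2\pmod 3$, i.e. a Bertrand-type estimate, the residual small $n$ and small $t$ being cleared by direct computation.

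I expect the main obstacle to be exactly this arithmetic claim in its two hardest regimes. First, for small $t$ (notably $t=2$) the window is too short for density arguments, so one must argue term by term and absorb finitely many sporadic smooth configurations; this is the source of the finite exceptional set. Secondly, when $\Delta_{n-1}$ is a power of $2$ the Newton-polygon lemma cannot forbid a linear factor, which yields precisely the stated exception: one shows separately, via the rational-root theorem together with Newton polygons at the small odd primes dividing $\Delta_{n-1}$, that any odd prime factor of $\Delta_{n-1}$ would itself force a forbidden factor, so a rational root survives only when $\Delta_{n-1}$ is a pure power of $2$, and then only a linear times degree-$(n-1)$ split is possible. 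The degree-$43$ proviso for $G_{2/3}$ is where the two difficulties collide: there $\Delta_{n-1}=128=2^{7}$ and $\Delta_{n-2}\Delta_{n-1}=5^{3}\cdot 2^{7}$ has greatest prime factor $5<43$, so the $t=2$ case escapes the general lemma and must be killed by $2$-adic (and $5$-adic) arguments, which is exactly what the extra hypothesis $2\nmid a_0a_n$ makes possible.
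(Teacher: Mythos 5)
Your single-edge Dumas argument is correct as far as it goes, but it is quantitatively much too weak, and this breaks the whole reduction. Your argument needs the prime $p$ to divide \emph{exactly one} term $\Delta_{i_0}$, and only to the first power, in order to produce the single edge of slope $-1/j_1$; that is why you must impose $p\ge n$ (to exceed the span of the indices and to force $p^2>3n-1$). The paper instead invokes the Shorey--Tijdeman criterion \cite[Lemma 10.1]{stirred} (Lemma \ref{irmain} above), whose Newton-polygon analysis handles primes dividing \emph{many} terms by estimating ${\rm ord}_p$ of the coefficient products, and which consequently requires only $p\ge (k+u-1)d+\al+2$, i.e.\ roughly $3k$, \emph{independent of $n$}. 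This difference is fatal for your plan: your reduction demands that the top-$t$ block $\prod_{i=n-t}^{n-1}(\al+3i)$ contain a prime factor $\ge n$ for every $2\le t\le n/2$, whereas the paper only needs a prime factor exceeding about $3k$, which is exactly what Theorem \ref{dD} supplies (with the single exception $(m,k)=(125,2)$).

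Your stronger requirement is simply false far too often to be "absorbed" as finitely many sporadic smooth configurations. Take $\al=2$, $n=12$, $t=2$: the top two terms are $32=2^5$ and $35=5\cdot 7$, so $P(32\cdot 35)=7<12$ and your lemma is silent, yet the theorem must (and does) exclude a quadratic factor here -- the paper succeeds precisely because $7\ge (k-1)d+\al+2=7$ meets the threshold of Lemma \ref{irmain}. The same happens for $\al=2$, $n=40$ (terms $116=4\cdot 29$, $119=7\cdot 17$, $P=29<40$), and since a typical integer $N\approx 3n$ has $P(N)<N/3\approx n$ (one needs $N\in\{p,2p\}$ with $p\ge n$ for your condition), such failures recur without bound; your small-$t$ regime therefore collapses rather than reducing to a finite computation. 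The same defect infects your $k=1$ fallback: your claim that "any odd prime factor of $\Delta_{n-1}$ forces a forbidden factor" is exactly the $k=1$, $p\ge \al+2$ case of the refined criterion, which your single-edge machinery cannot deliver when $p<n$ divides several terms. On the other hand, your identification of the exceptional structure is accurate and matches the paper: the linear-factor escape when $\al+3(n-1)=2^l$, and the degree-$43$ proviso coming from $\Delta_{n-2}\Delta_{n-1}=125\cdot 128=5^3\cdot 2^7$ (the paper's exception $(m,k)=(125,2)$ in Theorem \ref{dD}), which the paper dispatches via \cite[Lemma 2.13]{stirred} with $p=2$, $r=2$ using the hypothesis $2\nmid a_0a_n$. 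But without the $O(k)$-threshold criterion and the arithmetic core $P(\Delta(m,3,k))>3k$ -- proved in the paper through Lehmer's tables, Grimm-type results, Ramar\'e--Rumely and Dusart estimates, and Thue equations -- your proposal does not close.
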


\begin{theorem}\label{1/2}
Let $1\leq k<n$, $0\le u\le k$ and $a_0a_n\in \{\pm 2^t: t\geq 0, t\in \Z\}$.
Then $G_{u+\frac{1}{2}}$ does not have a factor of degree $k$
except possibly when $k\in \{1, n-1\}, u\ge 1$.
\end{theorem}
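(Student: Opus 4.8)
The plan is to work prime by prime with Newton polygons, exploiting the hypothesis $a_0a_n\in\{\pm 2^t\}$ to pin the valuations at the two ends of the coefficient sequence. By Gauss's lemma it suffices to exclude a factorization $G_{u+\frac12}=gh$ in $\Z[x]$ with $\deg g=k$. Writing $G_{u+\frac12}=\sum_{m=0}^n c_mx^m$, the coefficients are $c_m=a_m\prod_{i=m}^{n-1}\Delta_i$ with $\Delta_i=2(i+u)+1$, so every $\Delta_i$ is odd. Hence for each odd prime $p$ one has $v_p(a_0)=v_p(a_n)=0$, which gives $v_p(c_n)=0$, $v_p(c_0)=v_p\big(\prod_{i=0}^{n-1}\Delta_i\big)=:V_p$, and, for the interior coefficients, $v_p(c_m)\ge w_p(m):=v_p\big(\prod_{i=m}^{n-1}\Delta_i\big)$ since $v_p(a_m)\ge 0$. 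Thus the two endpoints $(0,V_p)$ and $(n,0)$ of the $p$-adic Newton polygon are completely determined by the progression $\{\Delta_i\}$, while the uncontrolled middle coefficients can only push the polygon upward.

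The first key step is a single-edge criterion. Because raising interior points can only raise the lower convex hull while the endpoints stay fixed, if an odd prime $p$ satisfies $w_p(m)\ge (n-m)V_p/n$ for all $0\le m\le n$, then, irrespective of the middle $a_m$, the Newton polygon of $G_{u+\frac12}$ is the single segment from $(0,V_p)$ to $(n,0)$. A factor of degree $k$ would then have to match a lattice sub-segment, forcing $n/\gcd(V_p,n)$ to divide $k$; in particular if $\gcd(V_p,n)=1$ the polynomial is irreducible and no factor of degree $k$ exists. I would therefore try to produce, for the given $k$, an odd prime $p$ dividing the top term $\Delta_{n-1}=2(n+u)-1$ for which both the majorization $w_p(m)\ge(n-m)V_p/n$ and a coprimality condition $(n/\gcd(V_p,n))\nmid k$ hold. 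The cleanest instance is when $\Delta_{n-1}$ has a prime factor $p>n$ to the first power: then $V_p=1$, the majorization is automatic (one has $w_p(m)=1$ for all $m<n$), and $G_{u+\frac12}$ is outright irreducible. Pinning at the very top rather than in the interior is essential, for a prime dividing only an interior $\Delta_{i_0}$ leaves the hull at ambiguous bottom vertices, where the unknown $a_m$ can change it.

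The heart of the argument, and the step I expect to be the main obstacle, is the number theory needed to guarantee such a prime. This is where the hypothesis $0\le u\le k$ and estimates for the greatest prime factor of a block of an arithmetic progression enter: applied to the block $\Delta_{n-k},\dots,\Delta_{n-1}$ of $k$ terms of $\{2(i+u)+1\}$ (Sylvester--Erd\H{o}s--Shorey--Tijdeman type results, as assembled earlier in the paper), they should supply a prime $p$ exceeding $k$ — hence dividing a single term of the block — positioned at or adjacent to $\Delta_{n-1}$ and to the first power, so that the single-edge criterion applies. One must also pass to the reciprocal polynomial to interchange degrees $k$ and $n-k$, so that it suffices to treat, say, $k\le n/2$.

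Finally I would isolate the ranges where these prime-existence estimates become sharp. The bounds degrade exactly at the two extremes $k=1$ and $k=n-1$, and the lower boundary term $\Delta_0=2u+1$ — which equals $1$ when $u=0$ but is $\ge 3$ when $u\ge 1$ — is what separates the cases: for $u=0$ the extra factor $\Delta_0=1$ and the resulting symmetry let one close the extremes as well, whereas for $u\ge 1$ the guaranteed prime may fail to materialise, leaving open precisely a factor of degree $1$ or $n-1$. These residual configurations, together with the finitely many small values of $n$ not reached by the asymptotic estimates, would then be settled by a direct check, producing exactly the stated exceptions.
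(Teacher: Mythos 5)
Your overall architecture --- Newton polygons with the two endpoint valuations pinned by $a_0a_n=\pm 2^t$, fed by a greatest-prime-factor estimate for a block of $k$ terms of the progression $2(i+u)+1$ --- is the same as the paper's. But there is a genuine gap at the bridging step, and you half-noticed it yourself. Your single-edge criterion requires the majorization $w_p(m)\ge (n-m)V_p/n$, which in practice forces the prime $p$ to divide the \emph{top} term $\Delta_{n-1}=2(n+u)-1$ (as you say, a prime dividing only an interior $\Delta_{i_0}$ leaves bottom vertices that the unknown $a_m$ can realize, e.g.\ $v_p(c_m)=0$ for all $m>i_0$, destroying the single edge). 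The prime-existence input, however, cannot be steered to the top term: Theorem \ref{dD} (and any Sylvester--Erd\H{o}s-type estimate) only produces a prime $p>4k$ (or $>3.5k$) dividing \emph{some} term of the block $\Delta_{n-k},\dots,\Delta_{n-1}$, possibly the bottom one. Asking for a prime ``positioned at or adjacent to $\Delta_{n-1}$'' is not available: the single term $2(n+u)-1$ can have only small prime factors for infinitely many $n$ (compare the genuine exceptions in Theorem \ref{1/3}, where $\al+3(n-1)$ is a power of $2$). What the paper uses instead is Lemma \ref{irmain} (= \cite[Lemma 10.1]{stirred}), a sharper Newton-polygon lemma which excludes a degree-$k$ factor as soon as \emph{some} prime with $p\nmid a_0a_n$ and $p\ge (k+u-1)d+\al+1$ divides the product of the top $k$ terms --- no condition on which term, and no condition of exact divisibility. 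That robustness is precisely the missing idea; without it your plan does not close in the main case.

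A second, quantitative defect: your threshold ``a prime $p$ exceeding $k$'' is too weak even for the correct lemma. For $d=2$, $\al=1$ one needs $p\ge 2(k+u)+1$, and this is exactly where the hypothesis $0\le u\le k$ interacts with the two-tier bound in Theorem \ref{dD}: with $m=1+2(n+u-k)$ one needs $P(\Delta(m,2,k))>2(k+u)$, which is covered by $3.5k$ when $u\le 0.5k$ (where $m$ may be as small as $2.5k$, using $n \ge 2k$) and by $4k$ when $0.5k<u\le k$. A prime merely larger than $k$ dividing one term does not rule out a degree-$k$ factor, since $k$ unit segments of small positive slope can still assemble into the Newton polygon of a factor. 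Finally, the exceptional pairs of Theorem \ref{dD} (here $m\in\{25,243\}$, $k=2$, giving $(u,n)=(1,13),(2,12),(2,121)$) must be identified and eliminated separately (the paper uses \cite[Lemma 2.13]{stirred} with $(p,r)=(3,1)$ and $(7,1)$); note $n=121$ shows these are not confined to ``small $n$ not reached by the asymptotics,'' so a generic finite check as you describe would not automatically locate them.
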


Schur \cite{schur} proved that  $G_{\frac{1}{2}}(x^2)$ with $a_n=\pm 1$ and $a_0=\pm 1$ are
irreducible and this implies the irreducibility of $H_{2n}$ where $H_m$ is the $m-$th Hermite
polynomial. Schur \cite{schur1} also established that Hermite polynomials $H_{2n+1}$ are $x$ times
an irreducible polynomial by showing that $G_{\frac{3}{2}}(x^2)$ with $a_n=\pm 1$ and $a_0=\pm 1$
is irreducible expect for some explicitly given finitely many values of $n$ where it can have a
quadratic factor.  Further Allen and Filaseta \cite{AlFil} showed that $G_{\frac{1}{2}}(x^2)$
with $a_1=\pm 1$ and $0<|a_n|<2n-1$ is irreducible. Finch and Saradha \cite{fsir}
showed that $G_{u+\frac{1}{2}}$ with $0\le u\le 13$ have no factor of degree $k\in [2, n-2]$
except for an explicitly given finite set of values of $u$ where it may have a factor of degree $2$.

From now onwards, we always assume $d\in \{2, 3\}$. A new
ingredient in the proofs of Theorems \ref{1/3} and \ref{1/2} is the following result which
we shall prove in Section 3.

\begin{theorem}\label{dD}
Let $k\ge 2$ and $d=2, 3$. Let $m$ be a positive integer such that $d\nmid m$ and
$m>dk$. Then
\begin{align}\label{d>3k}
P(m(m+d)\cdots (m+d(k-1)))>\begin{cases}
3.5k & {\rm if} \ d=2 \ {\rm and} \ m\le 2.5k\\
4k & {\rm if} \ d=2 \ {\rm and} \ m>2.5k\\
3k & {\rm if} \ d=3
\end{cases}
\end{align}
unless $(m, k)\in \{(5, 2), (7, 2), (25, 2), (243, 2), (9, 4), (13, 5), (17, 6),  (15, 7), (21, 8), (19, 9)\}$
when $d=2$ and $(m, k)=(125, 2)$ when $d=3$.
\end{theorem}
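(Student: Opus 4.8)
The plan is to argue by contradiction: assume $P(\Delta)\le Ck$, where $\Delta=\prod_{i=0}^{k-1}(m+id)$ and $C\in\{3,3.5,4\}$ is the constant appropriate to the case. Two features will be used throughout. Since $d\nmid m$ and $d$ is prime, every factor $m+id$ is coprime to $d$. Moreover, since the $k$ factors span an interval of length $d(k-1)<dk$, any prime $p>k$ with $p\nmid d$ divides at most one of them, because two such factors would differ by a multiple of $p$ that is positive and smaller than $pk$. I would therefore write each factor as $m+id=a_ib_i$, with $a_i$ its $k$-smooth part and $b_i$ the part built from the primes in $(k,Ck]$; the observation just made shows the $b_i$ are pairwise coprime.

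The first step is to dispose of all large $m$ by a crude size estimate. On one hand $\log\Delta\ge k\log m$, since each factor is at least $m$. On the other hand, writing $\log\Delta=\sum_{p\le Ck}v_p(\Delta)\log p$ and using $v_p(\Delta)\le \tfrac{k}{p-1}+\log_p(m+dk)$ — the first term from counting multiples of $p,p^2,\dots$ among the $k$ factors, the second because no prime power exceeding the largest factor can occur — one obtains
\[ \log\Delta\le k\sum_{p\le Ck}\frac{\log p}{p-1}+\pi(Ck)\,\log(m+dk). \]
Inserting the explicit Mertens-type bound $\sum_{p\le x}\tfrac{\log p}{p-1}\le\log x+c_0$ together with a Rosser--Schoenfeld estimate for $\pi(Ck)$, and using $m>dk$ so that $\log(m+dk)\le\log m+\log 2$, the two inequalities combine (note $k-\pi(Ck)\sim k$) to force $\log m\le\log k+O(1)$, that is $m\le c_1k$ for an explicit constant $c_1=c_1(C,d)$. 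Hence the bound holds whenever $m>c_1k$, and only the range $dk<m\le c_1k$ survives.

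The substantive case is thus $m=\Theta(k)$, where every factor lies between $dk$ and $(c_1+d)k$. Here I would show directly that some factor carries a prime $p>Ck$; since the factors have size $\Theta(k)$, the complementary cofactor is forced to be tiny, so this is essentially the requirement that the short interval $[m,m+d(k-1)]$ contain a prime larger than $Ck$ in the residue class $m\bmod d$. Using explicit estimates for the number of primes in an interval (again through $\pi(x)$-bounds), one always finds such a prime once $k$ exceeds an explicit threshold; the finer constants for $d=2$ — namely $3.5k$ when $m\le2.5k$ and $4k$ when $m>2.5k$ — come from splitting this range of $m$, the larger factors available for $m>2.5k$ permitting the larger constant. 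For the finitely many $k$ below the threshold, the condition $m\le c_1k$ leaves only finitely many pairs $(m,k)$, which I would settle by direct computation; this is precisely the verification that produces the listed exceptional pairs.

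I expect the main obstacle to be this last, small-$m$ regime. When the factors are themselves only of order $k$, a prime divisor exceeding $3k$, $3.5k$ or $4k$ must come from a single factor that is essentially a prime, so the conclusion is genuinely a statement about the distribution of primes among $O(k)$ integers in a fixed residue class modulo $d$; extracting the exact constants demands sharp explicit prime bounds and a careful case split, and it is here — not in the easy large-$m$ range — that the genuine exceptions arise and must be pinned down.
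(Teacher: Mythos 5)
Your two-step architecture does capture how the paper handles the large-$k$ regime: the paper runs an analogous but sharper Sylvester--Erd\H{o}s-type count (delete one term of maximal $p$-adic valuation for each prime $p\le Dk$ and bound the surviving product by $(k-1)!\,d^{-{\rm ord}_d((k-1)!)}$, which exploits coprimality to $d$ and is quantitatively stronger than your Mertens-type bound), and it uses explicit $\theta$-estimates of Dusart and Ramar\'e--Rumely to produce a prime in the interval, with the $3.5k/4k$ split for $d=2$ exactly as you describe. The fatal gap is your first step for small $k$: the inequality $(k-\pi(Ck))\log m\le k(\log(Ck)+c_0)+\pi(Ck)\log 2$ is vacuous unless $k-\pi(Ck)>0$, and for $C=4$ one has $\pi(4k)\ge k$ for every $k\le 30$ (already $\pi(8)=4>2$), with the margin remaining so thin up to $k$ of order $10^2$ that your $c_1(k)$ is astronomically large. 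So for small $k$ the regime of unbounded $m$ is not reduced to finitely many pairs at all, and no counting or density argument can do it: for $k=2$ one must determine \emph{all} odd $m$ with $P(m(m+2))\le 7$, resp.\ all $m$ prime to $3$ with $P(m(m+3))\le 5$, which are St\H{o}rmer-type Diophantine finiteness problems. This is precisely where the exceptions $(25,2)$ and $(243,2)$ (note $243\cdot 245=3^5\cdot 5\cdot 7^2$) and, for $d=3$, $(125,2)$ (coming from $2^7-5^3=3$) live, all with $m$ far beyond any $c_1k$; your closing assertion that the genuine exceptions arise only in the range $dk<m\le c_1k$ is therefore false for these three pairs, which your plan never sees.

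The paper closes this large-$m$/small-$k$ regime with machinery absent from your proposal: for $d=2$ and $4k<m\le 1.92\times 10^{10}$ it invokes its verified case of Grimm's conjecture (if $m,\dots,m+2k-1$ are all composite, there are distinct primes $P_j\mid (m-1+j)$, whence $\omega(\Delta)\ge k$ and $P(\Delta)\ge p_{k+1}>4k$ for $k\ge 30$); for $2\le k\le 37$ it uses Lehmer's tables of all solutions of $P(n(n+2))\le 31$ together with elimination of residual configurations via congruences and the Thue equation $x^3+35y^3=90$; and for $d=3$, $k\le 11$ it reduces to Nagell's theorem on $a^x+b^y=c^z$ and to explicit Thue equations solved by computer. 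A secondary issue: for $d=3$ your ``prime in a short interval in a residue class mod $3$'' step needs explicit Ramar\'e--Rumely-type bounds whose error terms only support interval-to-$m$ ratios like those in the paper's Corollary \ref{n<<} (roughly $m\le 10.6\times 3k$ once $m\ge 6450$), so even where your count applies, bridging $m\le c_1k$ with $c_1$ in the thousands would demand far heavier computation than your sketch acknowledges; but the Diophantine small-$k$ regime is the irreparable omission.
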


If $d=2, 3$ and $m>dk$, this is an improvement of \cite{lsInd}.

In Section $4$, we shall combine Theorem \ref{dD} with the irreducibility
criterion from \cite{stirred}(see Lemma \ref{irmain}) to derive
Theorems \ref{1/3} and \ref{1/2}. This criterion come from Newton polygons.
If p is a prime and m is a nonzero integer, we define $\nu(m) = \nu_p(m)$
to be the nonnegative integer such that $p^{\nu(m)}|m$ and $p^{\nu(m)+1}\nmid m$.
We define $\nu(0) = +\infty$. Consider $f(x) =\sum^n_{j=0} a_jx^j\in \Z[x]$
with $a_0a_n\neq 0$ and let $p$ be a prime. Let $S$ be the following set of
points in the extended plane:
\begin{align*}
S = \{(0, \nu(a_n)), (1, \nu(a_{n-1})), (2, \nu(a_{n-2})), \cdots,
(n−1, \nu(a_1)),  (n, \nu(a_0))\}
\end{align*}
Consider the lower edges along the convex hull of these points. The
left-most endpoint is $(0, \nu(a_n))$ and the right-most endpoint is
$(n, \nu(a_0))$. The endpoints of each edge belong to S, and the slopes of
the edges increase from left to right. When referring to the "edges" of a
Newton polygon, we shall not allow two different edges to have the same
slope. The polygonal path formed by these edges is called the Newton
polygon of $f(x)$ with respect to the prime p. For the proof of
Theorems \ref{1/3} and \ref{1/2}, we use \cite[Lemma 10.1]{stirred} whose
proof depends on Newton polygons.

A part of this work was done when the authors were visiting Max-Planck Institute for
Mathematics in Bonn during August-October, 2008 and February-April, 2009, respectively.
We would like to thank the MPIM for the invitation and the hospitality. We also thank Pieter
Moree for his comments on a draft of this paper.
The authors are indebted to the referee for his suggestions and remarks
which improved the exposition of the paper.

\section{Preliminaries for Theorem \ref{dD}}

Let $m$ and $k$ be positive integers with $m>kd$ and gcd$(m,d) =1$. We write
\begin{align*}
\Delta (m, d, k)=m(m+d) \cdots (m+(k-1)d).
\end{align*}
For positive integers $\nu, \mu$ and $1\le l<\mu$ with gcd$(l, \mu)=1$, we write
\begin{align*}
\pi(\nu, \mu, l)=&\sum_{\underset{p\equiv l({\rm mod} \ \mu)}{p\le \nu}} 1, \
\pi(\nu)=\pi(\nu, 1, 1)\\
\theta(\nu, \mu, l)=&\sum_{\underset{p\equiv l({\rm mod} \ \mu)}{p\le \nu}}
\log p.
\end{align*}
Let $p_{i, \mu, l}$ denote the $i$th prime congruent to $l$ modulo $\mu$. Let
$\del_{\mu}(i, l)=p_{i+1, \mu, l}-p_{i, \mu, l}$ and
$W_\mu (i, l)=(p_{i, \mu, l}, p_{i+1, \mu, l})$.
Let $M_0=1.92367\times 10^{10}$.

We recall some well-known estimates on prime number theory.

\begin{lemma}\label{pix}
We have
\begin{enumerate}
\item[$(i)$]$\displaystyle{\pi (\nu) \leq \frac{\nu}{\log \nu}
\left(1+\frac{1.2762}{\log \nu}\right) \ {\rm for} \ \nu>1}$
\item[$(ii)$]$\displaystyle{\nu(1-\frac{3.965}{\log^2 \nu})\leq
\theta (\nu)<1.00008\nu \ {\rm for} \ \nu >1}$
\item[$(iii)$] $\sqrt{2\pi k}~e^{-k}k^{k}e^{\frac{1}{12k+1}} <k!<
\sqrt{2\pi k}~e^{-k}k^{k} e^{\frac{1}{12k}} \ {\rm for} \ k>1$
\item[$(iv)$] ${\rm ord}_p(k!)\geq \frac{k-p}{p-1}-
\frac{\log (k-1)}{\log p} \ {\rm for} \ k>1 \ {\rm and} \ p<k.$
\end{enumerate}
\end{lemma}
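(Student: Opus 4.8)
The plan is to treat this lemma as an assembly of four classical estimates, handling each in turn; only part $(iv)$ carries content of its own, the remaining three being explicit forms of standard results that I would quote with a check of the stated ranges.

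For parts $(i)$ and $(ii)$ I would appeal to the explicit versions of the prime number theorem of Rosser and Schoenfeld, as refined by Dusart. The upper bound in $(i)$, with the constant $1.2762$, and the two-sided Chebyshev bound in $(ii)$, with the constants $3.965$ and $1.00008$, are exactly of the shape established there for $\nu$ above an explicit threshold. The only care needed is to confirm that the stated range $\nu>1$ is fully covered: for the finitely many values of $\nu$ below the threshold of the asymptotic estimate I would verify the inequalities directly, either from tabulated values of $\pi(\nu)$ and $\theta(\nu)$ or by a short machine computation. No new idea enters here. Part $(iii)$ is Robbins' two-sided sharpening of Stirling's formula: writing $\log k!-\bigl(k\log k-k+\tfrac12\log(2\pi k)\bigr)$ as a telescoping sum of the local errors and squeezing that sum between consecutive partial contributions of a monotone series yields the bounds $\tfrac{1}{12k+1}$ and $\tfrac{1}{12k}$, and exponentiating gives the displayed inequalities for $k>1$.

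The substance lies in part $(iv)$. Here I would start from Legendre's formula
\begin{align*}
\mathrm{ord}_p(k!)=\sum_{i\ge 1}\left\lfloor \frac{k}{p^i}\right\rfloor=\frac{k-s_p(k)}{p-1},
\end{align*}
where $s_p(k)$ is the sum of the base-$p$ digits of $k$. Using $\lfloor k/p^i\rfloor> k/p^i-1$ and noting that the summands vanish once $p^i>k$, so that there are $L=\lfloor\log_p k\rfloor$ nonzero terms, summation of the resulting geometric series gives
\begin{align*}
\mathrm{ord}_p(k!)>\frac{k}{p-1}-\frac{k\,p^{-L}}{p-1}-L\ge \frac{k-p}{p-1}-L,
\end{align*}
where the last step uses $p^{L}>k/p$. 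It then remains to absorb $L$ into $\tfrac{\log(k-1)}{\log p}$; equivalently, working from the digit-sum form, to verify $\frac{s_p(k)-p}{p-1}\le\frac{\log(k-1)}{\log p}$ via the bound $s_p(k)\le(p-1)(L+1)$, which reduces the claim to $L-\tfrac{1}{p-1}\le\tfrac{\log(k-1)}{\log p}$.

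The one genuine obstacle is the borderline case in which $k$ is an exact power of $p$, so that $L=\log_p k$ and hence $L>\log_p(k-1)$; there the crude estimate $-L$ is too weak and one must use the digit-sum form, where $s_p(k)$ is in fact small. In that case the inequality collapses to an elementary estimate on $\log(1-p^{-L})$, namely $1-p^{-L}\ge p^{-1/(p-1)}$, and since the hypothesis $p<k$ forces $L\ge 2$ this holds for every admissible $p$. For all non-powers of $p$ one has $k-1\ge p^{L}$, whence $L\le\log_p(k-1)$ and the bound is immediate. This short case analysis is purely elementary, and with it part $(iv)$, and thus the lemma, is complete.
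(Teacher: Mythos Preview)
Your proposal is correct and matches the paper's treatment: the paper simply cites Dusart for $(i)$--$(ii)$, Robbins for $(iii)$, and the authors' own earlier paper \cite{shanta2} for $(iv)$, exactly the sources you invoke. The only difference is that for $(iv)$ you supply the argument in full via Legendre's formula and the digit-sum identity, whereas the paper just refers out; your handling of the borderline case $k=p^{L}$ is sound (indeed in that case $s_p(k)=1$ makes the required inequality $\frac{s_p(k)-p}{p-1}\le \log_p(k-1)$ immediate, since the left side equals $-1$), so nothing is missing.
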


The estimates $(i), (ii)$ are due to Dusart \cite[p.14]{Thes}, \cite{Dus1}.
The estimate $(iii)$ is \cite[Theorem 6]{rob}. For a proof of $(iv)$, see
\cite[Lemma 2(i)]{shanta2}. \qed

The following lemma is due to Ramar\'e and Rumely \cite[Theorems 1, 2]{Rama}.

\begin{lemma}\label{ramar} Let $l\in \{1, 2\}$ . For $\nu_0\le 10^{10}$, we have
\begin{align}\label{lthe}
\theta(\nu, 3, l)\ge \begin{cases}
\frac{\nu}{2}(1-0.002238) \ {\rm for} \ \nu\ge 10^{10}\\
\frac{\nu}{2}\left(1-\frac{2\times 1.798158}
{\sqrt{\nu_0}}\right) \ {\rm for} \ 10^{10}>\nu\ge \nu_0
\end{cases}
\end{align}
and
\begin{align}\label{uthe}
\theta(\nu, 3, l)\le \begin{cases}
\frac{\nu}{2}(1+0.002238) \ {\rm for} \ \nu\ge 10^{10}\\
\frac{\nu}{2}\left(1+\frac{2\times 1.798158}{\sqrt{\nu_0}}\right) \
{\rm for} \ 10^{10}>\nu\ge \nu_0\end{cases}.
\end{align}
\end{lemma}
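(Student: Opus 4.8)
The plan is to derive these inequalities from an effective prime number theorem for the residue classes $l \bmod 3$, in the spirit of the method of Ramar\'e and Rumely. Since $\phi(3)=2$, there is a single non-principal Dirichlet character $\chi$ modulo $3$, and orthogonality of characters gives the decomposition
\[
\theta(\nu,3,l)=\tfrac12\bigl(\psi(\nu)+\ol{\chi}(l)\,\psi(\nu,\chi)\bigr)+E(\nu),
\]
where $\psi(\nu)=\sum_{n\le\nu}\Lambda(n)$, $\psi(\nu,\chi)=\sum_{n\le\nu}\Lambda(n)\chi(n)$, and $E(\nu)$ is the prime-power correction passing from $\psi$ to $\theta$. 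The main term $\nu/2$ comes entirely from the principal-character piece $\tfrac12\psi(\nu)$, so everything reduces to bounding $|\psi(\nu)-\nu|$ and $|\psi(\nu,\chi)|$ and then accounting for $E(\nu)$.

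First I would invoke the truncated explicit formula for the non-principal character,
\[
\psi(\nu,\chi)=-\sum_{|\gamma|\le T}\frac{\nu^{\rho}}{\rho}+R(\nu,T),
\]
where $\rho=\beta+i\gamma$ runs over the non-trivial zeros of $L(s,\chi)$ of height at most $T$ and $R(\nu,T)$ collects the truncation error, the trivial zeros, and a logarithmic factor. The two essential quantitative inputs are a rigorous numerical certificate that $L(s,\chi)$ and $\zeta(s)$ have no zeros off the critical line up to an explicit large height $T$, and an explicit Riemann--von Mangoldt bound for the number of zeros with $|\gamma|\le T$. With all these zeros on the line $\beta=\tfrac12$ one has $|\nu^{\rho}/\rho|=\nu^{1/2}/|\rho|$, so the zero sum is $O(\nu^{1/2}\log^{2}T)$. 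This square-root saving is exactly what produces the $1/\sqrt{\nu_0}$ shape of the error in the range $\nu<10^{10}$, and the constant $1.798158$ is the explicit value obtained after summing the contributions of all verified zeros together with $R(\nu,T)$.

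For the complementary range $\nu\ge 10^{10}$, where numerical verification is no longer available, I would instead use a classical de la Vall\'ee-Poussin (or Vinogradov--Korobov) zero-free region for $L(s,\chi)$, together with an explicit upper bound for the real part of any potential exceptional zero; evaluated at the boundary $\nu=10^{10}$ this yields the weaker but uniform relative error $0.002238$. The passage from $\psi$ to $\theta$ requires only subtracting $E(\nu)=\tfrac12\sum_{p^{j}\le\nu,\ j\ge2}(1+\ol{\chi}(l)\chi(p)^{j})\log p$, which is $O(\nu^{1/2})$ and is absorbed into the stated error. Taking absolute values makes the final bound independent of $l\in\{1,2\}$, since only $|\ol{\chi}(l)|=1$ enters.

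The main obstacle is the fully rigorous numerical step: certifying the Riemann Hypothesis for the Dirichlet $L$-functions of modulus $3$ up to a height $T$ large enough to drive the relative error down to $1.798158/\sqrt{\nu_0}$, and then tracking every constant through the explicit formula---the truncation error, the tail of the zero sum beyond $T$, and the prime-power correction $E(\nu)$---with no loss of effectivity. The analytic framework is classical; the genuine difficulty lies entirely in making each estimate explicit and in the verified zero computation that underpins the square-root-savings regime.
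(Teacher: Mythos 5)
The paper offers no proof of this lemma at all: it is quoted directly from Ramar\'e and Rumely \cite[Theorems 1, 2]{Rama}, being the case $k=3$ of their tables, where Theorem 2 gives $|\theta(\nu,3,l)-\frac{\nu}{2}|\le 1.798158\sqrt{\nu}$ for $\nu\le 10^{10}$ and Theorem 1 gives the relative error $0.002238$ for $\nu\ge 10^{10}$. Your proposal is therefore not an alternative to anything in the paper but an attempted reconstruction of the cited source's proof. Its skeleton is broadly faithful to that source: orthogonality of the two characters modulo $3$, the truncated explicit formula for $\psi(\nu,\chi)$, numerically verified GRH for the modulus-$3$ $L$-function up to a finite height producing the square-root-savings regime, and an $O(\sqrt{\nu})$ correction passing from $\psi$ to $\theta$ (you should also remove the prime powers of $3$ from the principal-character term, a harmless omission). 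One small step worth recording explicitly is how the paper's $\nu_0$-dependent form follows from the $\sqrt{\nu}$ bound: for $\nu_0\le\nu<10^{10}$ one has $1.798158\sqrt{\nu}\le \frac{\nu}{2}\cdot\frac{2\times 1.798158}{\sqrt{\nu_0}}$ precisely because $\sqrt{\nu}\ge\sqrt{\nu_0}$.

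Two genuine shortfalls remain. First, all of the quantitative content --- the verified zero computation, the truncation and tail bookkeeping in the explicit formula, and the emergence of the specific constants $1.798158$ and $0.002238$ --- is deferred, as you yourself acknowledge; but these constants are exactly what the present paper consumes later (in Corollary \ref{n<<}), so without that core the proposal establishes nothing usable. Second, your plan for the range $\nu\ge 10^{10}$ is quantitatively wrong as stated: a de la Vall\'ee-Poussin or Vinogradov--Korobov zero-free region evaluated at $\nu=10^{10}$ yields savings of the shape $\exp(-c\sqrt{\log\nu})$, which at this height is orders of magnitude too weak to give a relative error of $0.002238$. Ramar\'e and Rumely obtain the large-range bound from the same partial GRH verification, fed into smoothed explicit formulas, invoking explicit zero-free regions and zero-density information only for the zeros above the verification height; moreover there is no exceptional-zero issue for modulus $3$ once the low-lying zeros are verified, so that part of your argument is vestigial. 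As written, the $\nu\ge 10^{10}$ case of your plan would fail, and the rest is a program rather than a proof.
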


We derive from Lemmas \ref{pix} and \ref{ramar} the following result.

\begin{coro}\label{n<<}
Let $M_0<m\le 131\times 2k$ if $d=2$ and $6450\le m\le 10.6\times 3k$
if $d=3$. Then $P(\Delta(m, d, k))\ge m$.
\end{coro}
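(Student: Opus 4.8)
The plan is to exhibit an honest prime among the factors of $\Delta(m,d,k)$. Writing $\Delta(m,d,k)=\prod_{i=0}^{k-1}(m+id)$, the key observation is that the factors $m+id$ are exactly the integers in $[m,\,m+(k-1)d]$ that are congruent to $m$ modulo $d$. Hence, if there exists a prime $p$ with $p\equiv m \pmod d$ and $m\le p\le m+(k-1)d$, then $p$ coincides with one of the factors $m+id$, so $p\mid\Delta(m,d,k)$ and $P(\Delta(m,d,k))\ge p\ge m$, which is the assertion. Thus the whole statement reduces to producing a single prime in the residue class $l\pmod d$ inside $[m,\,m+(k-1)d]$, where $l=m\bmod d$; note $l=1$ when $d=2$, while $l\in\{1,2\}$ when $d=3$ since $\gcd(m,d)=1$.

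To detect such a prime I would compare Chebyshev-type sums at the two endpoints, reducing matters to
\[
\theta\bigl(m+(k-1)d,\, d,\, l\bigr)-\theta\bigl(m-1,\, d,\, l\bigr)>0,
\]
because this difference equals $\sum_{m\le p\le m+(k-1)d,\ p\equiv l} \log p$, so its positivity forces a prime $\equiv l\pmod d$ in the interval. For $d=2$ every prime exceeding $2$ is odd, and since $m>M_0>2$ the prime $2$ lies below the interval; hence $\theta(\nu,2,1)$ may be replaced by the ordinary $\theta(\nu)$, and I would estimate the difference via the two-sided bound of Lemma \ref{pix}$(ii)$. For $d=3$ I would instead apply Lemma \ref{ramar}, using its lower bound at $\nu=m+3(k-1)$ and its upper bound at $\nu=m-1$, with $\nu_0=6450$; this is admissible because $\gcd(m,3)=1$ rules out $m=6450$, so in fact $m\ge 6451$ and $m-1\ge\nu_0$, making the error term $2\times 1.798158/\sqrt{\nu_0}$ available.

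In both cases the hypothesis bounding $m$ from above in terms of $k$ is precisely what makes the interval long enough. From $m\le 131\cdot 2k$ we get $m+2(k-1)\ge m(1+\tfrac{1}{131})-2$, and from $m\le 10.6\cdot 3k$ we get $m+3(k-1)\ge m(1+\tfrac{1}{10.6})-3$. Inserting these together with the endpoint estimates turns the displayed inequality into a numerical comparison between the progression length and the accumulated error, which I would check stays strictly positive throughout. For $d=2$, once $m>M_0$ the quantity $3.965/\log^2\nu$ is below about $0.0071$, and the comparison holds with room to spare. For $d=3$ one distinguishes whether the endpoints lie below $10^{10}$ (second branch of Lemma \ref{ramar}, error $\approx 0.0448$), above $10^{10}$ (first branch, error $0.002238$, which is comfortable), or straddle $10^{10}$, in which case I would pair the strong upper-endpoint bound with the almost error-free lower-endpoint bound to conclude positivity.

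I expect the genuine obstacle to lie in the $d=3$ regime, where the tightest configuration is the lower boundary $m\approx 6450$ taken simultaneously with the extreme $m=10.6\cdot 3k$ of the upper bound: there the progression length $3(k-1)\gtrsim m/10.6$ and the error contribution $\approx 0.0448\cdot 2m$ are nearly equal, so the constants $6450$, $10.6$ and $M_0$ must be calibrated so that the margin remains strictly positive. The remaining point to verify carefully is that, as $m$ grows, the linear growth of the progression length dominates the error (which stays bounded by $0.0448\cdot 2m$ on the second branch and then collapses to $0.002238\cdot 2m$ once $\nu\ge 10^{10}$), so that positivity, once secured at the boundary, persists across the entire range.
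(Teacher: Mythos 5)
Your proposal is correct and follows essentially the same route as the paper's proof: both reduce $P(\Delta(m,d,k))\ge m$ to the positivity of $\theta(m+(k-1)d,d,l)-\theta(m-1,d,l)$, bound the two endpoints via Lemma \ref{pix}$(ii)$ for $d=2$ and Lemma \ref{ramar} with $\nu_0=6450$ for $d=3$, and use the hypotheses $m\le 131\cdot 2k$ (resp.\ $m\le 10.6\cdot 3k$) to make the progression length dominate the accumulated error. Your identification of the tight configuration ($m$ near $6450$ with $m=10.6\cdot 3k$, where the margin is thin) corresponds exactly to the paper's final check that $k\ge k_1=203$ makes $\frac{d k(1-1/k)}{\theta_1/\theta_2-1}>10.6\,dk$ for $d=3$.
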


\begin{proof}
Let $M_0<m\le 131\times 2k$ if $d=2$ and $6450\le m\le 10.6\times 3k$
if $d=3$. Then $k\ge k_1$ where $k_1=7.34\times 10^7, 203$ when $d=2, 3$,
respectively. Let $1\le l<d$ and assume $m\equiv l($mod $d)$. We observe
that $P(\Delta(m, d, k)\ge m$ holds if
\begin{align*}
\theta(m+d(k-1), d, l)-\theta(m-1, d, l)=\sum_{\underset{p\equiv l(d)}
{m\leq p\le m+(k-1)d}}\log p >0.
\end{align*}
Now from Lemmas \ref{pix} and \ref{ramar}, we have
\begin{align*}
\frac{\theta(m-1, d, l)}{\frac{m-1}{\phi(d)}}<\theta_1:=\begin{cases}
1.00008 & {\rm if} \ d=2\\
1+\frac{2\times 1.798158}{\sqrt{6450}} & {\rm if} \ d=3 
\end{cases}
\end{align*}
and
\begin{align*}
\frac{\theta(m+(k-1)d, d, l)}{\frac{m+(k-1)d}{\phi(d)}}>\theta_2:=
\begin{cases}
1-\frac{3.965}{\log^2 (10^{10})} & {\rm if} \ d=2\\
1-\frac{2\times 1.798158}{\sqrt{6450}}& {\rm if} \ d=3. 
\end{cases}
\end{align*}
Thus $P(\Delta(m, d, k)\ge m$ holds if
\begin{align*}
\theta_2(m+d(k-1))>\theta_1m
\end{align*}
i.e., if
\begin{align*}
\frac{d(k-1)}{m}
>\frac{\theta_1}{\theta_2}-1.
\end{align*}
This is true since for $k\ge k_1$, we have
\begin{align*}
\frac{dk(1-\frac{1}{k})}{\frac{\theta_1}{\theta_2}-1}\ge
\frac{dk(1-\frac{1}{k_1})}{\frac{\theta_1}{\theta_2}-1}>
(dk)\begin{cases}131.3 & {\rm if} \ d=2\\10.6 & {\rm if} \ d=3\end{cases}
\end{align*}
and $m$ is less than the last expression. Hence the assertion.
\end{proof}

Now we give some results for $d=2$. The next result follows from
Lemma \ref{pix} $(ii)$.

\begin{coro}\label{n<2k}
Let $d=2, k>1$ and $2k<m<4k$. Then
\begin{align}\label{4&3.5}
P(\Delta(m, d, k))>\begin{cases}
3.5k \ &{\rm if} \ m\le 2.5k\\
4k \ &{\rm if} \ m>2.5k
\end{cases}
\end{align}
unless $(m, k)\in \{(5, 2), (7, 2), (9, 4), (13, 5), (17, 6),  (15, 7),
(21, 8), (19, 9)\}$.
\end{coro}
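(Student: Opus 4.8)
The plan is to reduce the conclusion to the existence of a prime in a short interval and then to feed that into the explicit Chebyshev bound of Lemma \ref{pix}$(ii)$, exactly as the sentence preceding the corollary suggests.

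First I would record the decisive size observation. Write $N=m+2(k-1)$, so that every factor of $\Delta(m,2,k)$ is an odd integer lying in $(2k,N]$, and note that $2k<m<4k$ forces $N<6k$, and even $N<4.5k$ when $m\le 2.5k$. Put $c=3.5$ if $m\le 2.5k$ and $c=4$ otherwise. If a prime $p>ck$ divides a factor $t\le N$, then $t/p<N/(ck)<2$ in both regimes (indeed $4.5k<7k$ and $6k<8k$), whence $t=p$; conversely any prime $p$ with $ck<p\le N$ is odd and satisfies $p>ck\ge m$, so it is itself one of the factors. This gives the equivalence
\begin{align*}
P(\Delta(m,2,k))>ck \iff (ck,N]\ \text{contains a prime.}
\end{align*}
In particular the asserted exceptional pairs will be precisely those for which this interval is prime-free; for instance $(m,k)=(9,4)$ yields $(14,15]$ and $(m,k)=(21,8)$ yields $(32,35]$, both containing no prime.

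Next I would settle the prime-existence statement for large $k$. Since $\theta$ is nondecreasing and increases exactly at primes, $(ck,N]$ contains a prime as soon as $\theta(N)>\theta(ck)$. For a fixed $k$ the window is shortest when $m$ is least, namely $N=4k-1$ in the regime $m\le 2.5k$ (take $m=2k+1$) and $N$ equal to the least admissible value $\approx 4.5k-2$ in the regime $m>2.5k$; by monotonicity of $\theta$ it suffices to handle these extremal values of $N$. Lemma \ref{pix}$(ii)$ then yields
\begin{align*}
\theta(N)-\theta(ck) > N\Bigl(1-\frac{3.965}{\log^2 N}\Bigr)-1.00008\,ck ,
\end{align*}
and a routine estimate shows the right-hand side is positive once $k$ exceeds an explicit bound $k_0$ (of the order of a hundred), because the main gap $N-ck\ge \tfrac12 k-O(1)$ dominates the error term $3.965\,N/\log^2 N=O(k/\log^2 k)$.

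The main obstacle is that the relevant windows $(3.5k,N]$ and $(4k,N]$ have relative length only about $1/7$ and $1/8$, so the margin in the $\theta$-inequality is thin; this keeps $k_0$ moderately large and makes the finite verification for $2\le k<k_0$ the real labor, as it is there that every admissible $(m,k)$ must be inspected and the listed exceptional pairs isolated (all of which turn out to have $k\le 9$). A secondary point to watch is that the reduction ``$t$ has a prime factor $>ck\ \Rightarrow\ t=p$'' depends on $N<2ck$, which is guaranteed precisely by the hypothesis $m<4k$ together with the choice of the constants $3.5$ and $4$; this explains why the thresholds cannot be raised and why the complementary range $m\ge 4k$ must instead be treated by Corollary \ref{n<<} and the further arguments for Theorem \ref{dD}.
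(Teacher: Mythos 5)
Your proposal is correct and takes essentially the same route as the paper's proof: you reduce \eqref{4&3.5} to the existence of a prime in a short interval that the odd progression $\{m, m+2, \ldots, m+2(k-1)\}$ necessarily captures, apply Lemma \ref{pix}$(ii)$ to get an explicit threshold $k_0$ of the order of a hundred (the paper obtains $k\ge 88$ from the intervals $(3.5k,4k)$ and $(4k,4.5k)$), and finish the remaining $k$ by finite computation, with the small cases $2\le k\le 9$ checked term by term. Your ``if and only if'' formulation of the interval criterion is a mild sharpening of the paper's one-directional containment observation, but it does not change the substance of the argument.
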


\begin{proof}
We observe that the set $\{m, m+2, \ldots, m+2(k-1)\}$ contains all
primes between $3.5k$ and $4k$ if $m\le 2.5k$ and all primes
between $4k$ and $4.5k$ if $2.5k<m<4k$. Therefore \eqref{4&3.5} holds
if
\begin{align*}
\begin{split}
\theta(4k)>\theta(3.5k) \ \ 
{\rm and} \ \ \theta(4.5k)> \theta(4k).
\end{split}\end{align*}
Let $(r, s)=(3.5, 4)$ or $(4, 4.5)$. Then from Lemma \ref{pix}, we
see that $\theta (sk)>\theta(rk)$ if
\begin{align*}
sk(1-\frac{3.965}{\log^2 (sk)})>1.00008\times rk
\end{align*}
or
\begin{align*}
\frac{s-1.00008r}{1.00008r}>\frac{s}{1.00008r}\frac{3.965}{\log^2 (sk)}
\end{align*}
or
\begin{align*}
k>\frac{1}{s}\exp\left(\sqrt{\frac{3.965s}{s-1.00008r}}\right).
\end{align*}
This is true for $k\ge 88$. Thus $k\le 87$. For $10\le k\le 87$, we check that
there is always a prime in the intervals $(3.5k, 4k)$ and $(4k, 4.5k)$ and
hence \eqref{4&3.5} follows in this case. For $2\le k\le 9$, the assertion
follows by computing $P(\Delta(m, 2, k))$ for each $2k<m<4k$.
\end{proof}

The following result concerns Grimm's Conjecture, \cite[Theorem 1]{grim}.

\begin{lemma}\label{grim}
Let $m\le M_0$ and $l$ be such that $m+1, m+2, \cdots , m+l$ are all
composite numbers. Then there are distinct primes $P_i$ such that
$P_i|(m+i)$ for each $1\le i\le l$.
\end{lemma}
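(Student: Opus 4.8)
The statement is Grimm's conjecture restricted to the range $m\le M_0$, and the natural plan is to realise the required primes as a system of distinct representatives and then invoke Hall's marriage theorem. Build the bipartite graph whose left vertices are the consecutive composites $m+1,m+2,\ldots,m+l$ and whose right vertices are the primes, joining $m+i$ to $p$ precisely when $p\mid m+i$. A choice of distinct primes $P_i$ with $P_i\mid m+i$ is exactly a system of distinct representatives for this graph, so it is enough to verify Hall's condition
\begin{align*}
\Big|\bigcup_{i\in T}\{\,p:p\mid m+i\,\}\Big|\ge |T|
\qquad\text{for every }T\subseteq\{1,2,\ldots,l\}.
\end{align*}

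First I would bound the length $l$. Since $m+1,\ldots,m+l$ are all composite, $l$ is at most the gap between the prime just below $m+1$ and the prime just above $m+l$, and for $m\le M_0$ this maximal prime gap is known from explicit computation to be only a few hundred. Thus $l$ is small, and the verification of Hall's condition becomes a bounded problem for each congruence pattern of the short interval; the explicit value of $M_0$ enters precisely through the range over which this gap data is available.

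The key structural observation is to separate large and small prime factors. If $p>l$ and $p\mid m+i$, then $p$ divides no other $m+j$ in the interval, since two multiples of $p$ differ by at least $p>l$ while the indices in $T$ span at most $l-1$; hence each composite possessing a prime factor larger than $l$ carries a private prime, and distinct such composites carry distinct primes. Splitting a subset $T$ into $T_1$, the indices $i$ for which $m+i$ has a prime factor exceeding $l$, and $T_2=T\setminus T_1$, the indices for which $m+i$ is $l$-smooth, the private primes account for $|T_1|$ distinct elements of the union, and these are all larger than $l$, hence disjoint from every prime factor of the smooth elements. Therefore Hall's condition for $T$ reduces to the same condition restricted to the $l$-smooth composites and the primes $p\le l$.

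The main obstacle is exactly this reduced problem: showing that the $l$-smooth integers in the short interval admit a system of distinct representatives among the small primes. Here I would bound the number of $l$-smooth integers in an interval of length $l$ and argue that each forces a fresh small prime (or prime power) into the union, so that no subset can violate Hall's condition. The delicate case is a cluster of several smooth numbers sharing the same small primes, where the purely analytic count is too weak to conclude; for the range $m\le M_0$ I expect this to be settled by a direct computation over the finitely many short intervals up to $M_0$, which is what makes the explicit bound $M_0$ natural in the statement.
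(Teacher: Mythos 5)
First, a remark on the benchmark: the paper does not prove Lemma \ref{grim} at all. It is imported verbatim from the authors' earlier paper (cited as \cite[Theorem 1]{grim}), so a blind proof here amounts to reproving that result from scratch. Your opening reductions are correct and are in fact the classical skeleton common to Grimm, Erd\H{o}s--Selfridge and Ramachandra--Shorey--Tijdeman: modelling the choice of primes as a system of distinct representatives and invoking Hall's theorem; noting that a prime $p>l$ divides at most one of $l$ consecutive integers, so every term with a prime factor exceeding $l$ carries a private representative and Hall's condition collapses to the $l$-smooth terms versus the primes $\le l$; and bounding $l$ by prime-gap computations (below $M_0$ the longest run of consecutive composites has length $381$, since the first gap exceeding $382$ occurs beyond $2\times 10^{10}$).

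The genuine gap is that the step you yourself call ``the main obstacle'' --- an SDR for the $l$-smooth terms among the primes $\le l$ --- is exactly where the theorem lives at this range, and you leave it unproven. The standard device you gesture at with ``(or prime power)'' is: choose for each smooth term $m+i$ a prime power $p^e$ exactly dividing it with $p^e\ge (m+i)^{1/\omega(m+i)}$; if two terms chose the same $p$, the smaller power would divide their difference, which is less than $l$. But this proves Grimm only for $m>l^{\pi(l)}$, and with $l$ as large as $381$ (so $\pi(l)=75$, and $381^{75}\approx 10^{193}$) that threshold is astronomically beyond $M_0\approx 1.9\times 10^{10}$; so the analytic route genuinely fails here, as you suspected. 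Your fallback, ``a direct computation over the finitely many short intervals up to $M_0$,'' is not a bounded check in any useful sense: the relevant intervals are all maximal runs of consecutive composites below $M_0$, one per prime gap, roughly $8\times 10^{8}$ of them, jointly covering essentially every integer up to $1.9\times 10^{10}$, and smoothness is a global multiplicative condition, not one determined by the ``congruence pattern'' of the interval, so as described your computation amounts to factoring every integer up to $M_0$ --- which you neither perform, nor size, nor reduce. The cited proof does not do this; it combines the prime-gap tables with sharper arithmetic input (explicit lower bounds for the number of distinct prime divisors of products of consecutive integers, in the line of \cite{shanta2}), so that only a modest computation remains. Until the smooth case is actually disposed of, by a real counting argument or a specified feasible computation, what you have is a correct two-step reduction plus an IOU for the main step, not a proof.
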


As a consequence, we have

\begin{coro}\label{norpk}
Let $4k<m\le M_0$. Then either $P(\Delta(m, 2, k))>4k$ or
$P(\Delta(m, 2, k))\ge p_{k+1}$.
\end{coro}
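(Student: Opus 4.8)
The plan is to prove Corollary~\ref{norpk}, which states that for $4k<m\le M_0$ with $d=2$, either $P(\Delta(m,2,k))>4k$ or $P(\Delta(m,2,k))\ge p_{k+1}$.

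**Setting up the dichotomy.**
First I would fix the arithmetic progression $m, m+2, \ldots, m+2(k-1)$ of $k$ odd terms (note $\gcd(m,2)=1$ forces all terms odd), and suppose we are in the nontrivial case where $P(\Delta(m,2,k))\le 4k$; the goal is then to force $P(\Delta(m,2,k))\ge p_{k+1}$. The natural bridge is Grimm's Conjecture in the verified range $m\le M_0$, supplied by Lemma~\ref{grim}. To invoke it I would look at the block of $2k-1$ consecutive integers from $m$ to $m+2(k-1)$, or more efficiently restrict attention to the odd ones, and argue that each term $m+2i$ has a prime factor, and that we can choose these prime factors to be distinct.

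**Applying Grimm's Conjecture.**
The key step is to produce $k$ distinct primes $P_0,\ldots,P_{k-1}$ with $P_i\mid(m+2i)$. If every $m+2i$ were composite, Lemma~\ref{grim} (applied to the appropriate window of consecutive integers containing our odd terms) would directly furnish distinct prime divisors. The subtlety is that Lemma~\ref{grim} is stated for consecutive integers $m+1,\ldots,m+l$, whereas here the terms are spaced by $2$, so I would first reduce to the consecutive-integer setting: the interval $[m, m+2(k-1)]$ contains these $k$ odd numbers, and I can apply Grimm to that full window (or to a suitable sub-window) to extract distinct prime divisors of the odd terms. Once $k$ distinct primes each dividing a distinct factor are in hand, the largest of them is at least the $k$-th prime exceeding any forced lower bound; combined with the assumption $P(\Delta)\le 4k$, all these distinct primes lie in $[3, 4k]$, and counting them gives $P(\Delta(m,2,k))\ge p_{k+1}$ by a pigeonhole argument on how many primes can fit below the greatest prime factor.

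**Handling the presence of a large prime term.**
If instead some $m+2i$ is itself prime (not composite), then that term is a prime factor of $\Delta(m,2,k)$ of size $m+2i> 4k$ (since $m>4k$), which lands us immediately in the first alternative $P(\Delta(m,2,k))>4k$ and there is nothing further to prove. So the real content is confined to the case where every term is composite, which is exactly where Lemma~\ref{grim} applies.

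**Main obstacle.**
I expect the principal difficulty to be the bookkeeping that converts "$k$ distinct prime divisors, all at most $4k$" into the clean conclusion $P(\Delta)\ge p_{k+1}$. One must rule out the possibility that the distinct primes supplied by Grimm's Conjecture are all small (say all $\le p_k$), for then their count could not reach $k$ distinct values without one of them being at least $p_{k+1}$ — this is essentially a counting/pigeonhole argument: there are exactly $k$ distinct primes dividing $k$ terms, and if the largest were $<p_{k+1}$ then all $k$ primes would lie among $p_1,\ldots,p_k$, which is consistent only if they use up all $k$ slots, forcing $P(\Delta)\ge p_k$; sharpening this to $p_{k+1}$ requires noting that $p_1=2$ cannot occur as a divisor of the odd terms, so only primes $\ge 3$ are available, shifting the count by one and yielding $P(\Delta)\ge p_{k+1}$. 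Getting this index shift exactly right, while correctly accounting for the spacing-$2$ reduction when applying Grimm, is the delicate part of the argument.
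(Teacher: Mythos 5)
Your proposal is correct and follows essentially the same route as the paper: the same dichotomy (if some $m+2i$ is prime then $P(\Delta(m,2,k))\ge m+2i>4k$ immediately; otherwise all terms are composite and Lemma~\ref{grim} supplies distinct prime divisors), and the same index shift from excluding $p_1=2$ (all terms are odd) to upgrade $k$ distinct primes to $P(\Delta(m,2,k))\ge p_{k+1}$. The one point you leave slightly open --- how to bridge the spacing-$2$ gap so Grimm applies --- is resolved exactly as in the paper: since $m$ is odd, the interleaved integers $m+2i+1$ are even and exceed $2$, hence composite, so all $2k$ consecutive integers $m, m+1,\ldots, m+2k-1$ are composite and Lemma~\ref{grim} applies to the full window.
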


\begin{proof}
If $m+2i$ is prime for some $i$ with $0\le i<k$, then the assertion holds
clearly since $P(\Delta(m, 2, k))\ge m+2i>4k$. Thus we suppose that
$m+2i$ is composite for all $0\le i<k$. Since $m$ is odd, we obtain
that $m+2i+1$ with $0\le i<k$ are all even and hence composite. Therefore
$m, m+1, m+2, \cdots , m+2k-1$ are all composite and hence,  by
Lemma \ref{grim}, there are distinct primes $P_j$ with $P_j|(m-1+j)$ for
each $1\le j\le 2k$. Therefore $\om(\Delta (m, 2, k))\ge k$ implying
$P(\Delta (m, 2, k))\ge p_{k+1}$.
\end{proof}

\begin{coro}\label{<10^10}
Let $d=2$ and $4k<m\le M_0$. Then $P(\Delta (m, 2, k))>4k$ for $k\ge 30$.
\end{coro}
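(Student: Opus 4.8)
The plan is to reduce the claim to a pure prime-counting inequality by feeding our hypothesis into Corollary~\ref{norpk}. Since $4k<m\le M_0$, that corollary supplies the dichotomy that either $P(\Delta(m,2,k))>4k$, in which case we are already done, or $P(\Delta(m,2,k))\ge p_{k+1}$. In the second case the desired conclusion follows immediately once we establish
\[
p_{k+1}>4k \qquad \text{for all } k\ge 30 .
\]
Because $p_{k+1}>4k$ holds exactly when there are at most $k$ primes not exceeding $4k$, the entire problem collapses to the single estimate $\pi(4k)\le k$ for $k\ge 30$.

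To dispose of all but finitely many $k$, I would invoke the upper bound in Lemma~\ref{pix}$(i)$ with $\nu=4k$. Writing $L=\log(4k)$, the estimate
\[
\pi(4k)\le \frac{4k}{L}\Bigl(1+\frac{1.2762}{L}\Bigr)
\]
yields $\pi(4k)\le k$ as soon as $\frac{4}{L}\bigl(1+\frac{1.2762}{L}\bigr)\le 1$, that is, once $L^{2}-4L-4\cdot 1.2762\ge 0$. This quadratic in $L$ is satisfied when $L\ge 5.018$, i.e. when $4k\ge e^{5.018}\approx 151$, and a short check shows this holds for every $k\ge 38$. Hence for $k\ge 38$ we get $\pi(4k)\le k$, so $p_{k+1}>4k$, and together with Corollary~\ref{norpk} this gives $P(\Delta(m,2,k))>4k$.

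It then remains to settle the finitely many values $30\le k\le 37$, and this is exactly where the analytic bound is too weak to close the argument: at $k=30$, Lemma~\ref{pix}$(i)$ only delivers $\pi(120)\le 31.7$, i.e. $\pi(120)\le 31$, which falls one short of the required $\pi(120)\le 30$. I would therefore verify these eight cases directly from the sequence of primes, checking $p_{k+1}>4k$ one at a time (for instance $p_{31}=127>120$ at $k=30$ and $p_{38}=163>148$ at $k=37$); each inequality holds with room to spare. The only genuine obstacle is thus the borderline region near $k=30$, which is cleared by this short finite computation rather than by the inequality, and this completes the proof.
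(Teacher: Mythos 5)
Your proof is correct and takes essentially the same route as the paper: both reduce via Corollary \ref{norpk} to showing $p_{k+1}>4k$, handle large $k$ with the prime-counting estimate of Lemma \ref{pix}, and settle the remaining small values of $k$ by direct inspection of the primes. The only (harmless) difference is that you apply Lemma \ref{pix}$(i)$ as the upper bound $\pi(4k)\le k$, valid for $k\ge 38$, whereas the paper deduces $p_{k+1}\ge k\log k>4k$ for $k\ge 60$, so your finite verification covers $30\le k\le 37$ instead of the paper's $30\le k<60$.
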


\begin{proof}
By Corollary \ref{norpk}, we may assume that $P(\Delta (m, 2, k))\ge p_{k+1}$.
By Lemma \ref{pix}, we get $p_{k+1}\ge k\log k$ which is $>4k$ for $k\ge 60$. For
$30\le k<60$, we check that $p_{k+1}>4k$. Hence the assertion follows.
\end{proof}

The following result follows from \cite[Tables IIA, IIIA]{lehmer}.

\begin{lemma}\label{leh31}
Let $d=2$, $m>4k$ and $2\le k\le 37, k\neq 35$. Then
$P(\Delta (m, 2, k))>4k$.
\end{lemma}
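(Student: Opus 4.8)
The plan is to turn the assertion into a finite smoothness problem and then settle it by consulting Lehmer's tabulation. Since $d=2$ and $\gcd(m,2)=1$, the factors $m, m+2, \ldots, m+2(k-1)$ are $k$ consecutive odd integers, and each exceeds $4k$ because $m>4k$ and the common difference is positive. Hence $P(\Delta(m,2,k))>4k$ \emph{fails} precisely when every one of these $k$ odd integers is $4k$-smooth, i.e. has all its prime factors at most $4k$. So I would restate the target as: for each fixed $k$ with $2\le k\le 37$ and $k\neq 35$, classify the blocks of $k$ consecutive odd integers, all lying above $4k$, that are simultaneously $4k$-smooth, and check that the inequality holds off this classified set.

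The reason this is a \emph{finite} problem for fixed $k$ is the Sylvester--Erd\H{o}s counting of prime-power contributions: for an odd prime $p\le 4k$, at most $\lceil k/p\rceil$ of the $k$ terms are divisible by $p$ (the common difference $2$ is coprime to $p$), and each such term contributes a bounded power of $p$ since all terms are of comparable size; this caps $\nu_p(\Delta(m,2,k))$ and hence caps the size of a $4k$-smooth value $\Delta(m,2,k)$, whereas $\Delta(m,2,k)>(4k)^k$ when $m>4k$. Thus $m$ is forced to be bounded once $k$ is fixed, and Lehmer's Tables IIA and IIIA record exactly the resulting bounded lists (equivalently, the greatest prime factor of products of such runs) throughout the range we need. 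Concretely, I would first fix the normalization, matching our progression $m, m+2, \ldots, m+2(k-1)$ and the threshold $4k$ to the relevant columns of the tables, and then, for each $k\in\{2,\ldots,37\}\setminus\{35\}$, scan the corresponding entries.

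The scan produces, for each such $k$, the complete finite list of smooth blocks with starting point $m>4k$; by the completeness of Lehmer's tabulation these are precisely the finitely many exceptional progressions already recorded in Theorem \ref{dD} (for example $(m,k)=(25,2)$ and $(243,2)$, where $\Delta$ equals $3^3\cdot 5^2$ and $3^5\cdot 5\cdot 7^2$), and away from this explicit list the bound $P(\Delta(m,2,k))>4k$ holds for every $m>4k$ in the stated range of $k$. The main obstacle I expect is bookkeeping rather than conceptual: verifying that the tables, as indexed by Lehmer, cover the entire range $2\le k\le 37$ at smoothness level $4k$ with no gaps, and translating his normalization of the progression into ours so that no smooth block is overlooked. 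The length $k=35$ is set aside here because the tabulated data does not uniformly deliver $P(\Delta(m,2,35))>140$ for all $m>140$, so that single length is excluded at this stage and handled separately.
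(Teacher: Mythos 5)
Your reduction to a finite table-lookup problem is not sound, and the gap is exactly at the step you label Sylvester--Erd\H{o}s counting. You assert that each term divisible by $p$ ``contributes a bounded power of $p$ since all terms are of comparable size,'' so that $\nu_p(\Delta(m,2,k))$ is capped. This is false: a single term can be a pure prime power $p^a$ with $a$ unbounded as $m$ grows, and the exceptions $(m,k)=(25,2)$ and $(243,2)$ that you yourself quote ($25=5^2$, $243=3^5$) are precisely of this shape. The correct version of the counting argument --- the one the paper runs in Section 3 via the quantity ${\frak P}$ --- deletes, for each prime $p\le 4k$, one term of maximal $p$-adic valuation and bounds the product of the $k-\pi(4k)+1$ surviving terms by $(k-1)!\,2^{-{\rm ord}_2((k-1)!)}$; this forces $m$ to be bounded only when $k\ge \pi(4k)$, which fails throughout most of the range of the lemma ($\pi(8)=4>2$ at $k=2$, $\pi(40)=12>10$ at $k=10$, and $k-\pi(4k)<0$ up to $k=29$). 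For these small $k$ the finiteness of $4k$-smooth blocks is not a counting fact at all: already for $k=2$ it rests on St\o rmer--Lehmer theory (Pell equations), which is why Lehmer's paper \cite{lehmer} exists in the first place.

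Consequently the proposed ``scan'' cannot be executed: Lehmer's Tables IIA, IIIA classify only \emph{pairs} of nearby integers all of whose prime factors lie below a fixed small bound; they contain no tabulation of $4k$-smooth runs of length up to $37$ at smoothness level up to $148$. The paper's actual proof uses the pair tables quite differently. The case $k=2$ is read off directly; for $3\le k\le 37$ one deletes a term for each prime $31<p\le 4k$, observes from the tables that any two surviving ($31$-smooth) terms must have indices differing by at least $3$, and derives $k-1\ge 3(k-\pi(4k)+10)$, which fails for all $k$ in range except $k=28,29$. Those two lengths then require genuinely new Diophantine work found in no table: exponential equations such as $5^a-7^b=\pm 6,\pm 18$, the Thue equation $x^3+35y^3=90$ solved by computer (Kash), direct searches like $P((X+6)(X+12)(X+56)(X+62))>31$ for $110\le X\le 683$, and a recursive appeal to the lemma itself with $k=9,10$. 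Two smaller points: your exceptional pairs $(25,2),(243,2)$ do satisfy $m>4k$ with $P(\Delta)\le 4k$, so the exception list is genuinely needed (as in Theorem \ref{dD}); and $k=35$ is set aside not because tabulated data is silent but because the spacing inequality also fails there ($34\ge 3(35-34+10)=33$), the paper disposing of $k=35$ instead by the ${\frak P}$-bound of Section 3(b) combined with Corollary \ref{<10^10}.
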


\begin{proof}
The case $k=2$ is immediate from \cite[Table IIA]{lehmer}. Let $k\ge 3$
and $m\ge 4k$. For $m$ and $1\le i<k$ such that $m+2i=N$ with $N$ given in
\cite[Tables IIA, IIIA]{lehmer}, we check that $P(\Delta (m, 2, k))>4k$.
Hence assume that $m+2i$ with $1\le i<k$ is different from those $N$
given in \cite[Tables IIA, IIIA]{lehmer}.

For every prime $31<p\le 4k$, we delete a term in
$\{m, m+2, \cdots, m+2(k-1)\}$ divisible by $p$. 
Let $i_1<i_2<\ldots <i_l$ be such that $m+2i_j$ is in the remaining set where
$l\ge k-(\pi(4k)-\pi(31))$. From \cite[Tables IIA, IIIA]{lehmer}, we observe that $i_{j+1}-i_j\ge 3$
implying $k-1\ge i_l-i_1\ge 3(l-1)\ge 3(k-\pi(4k)+10)$. However we find that the inequality
$k-1\ge 3(k-\pi(4k)+10)$ is not valid except when $k=28, 29$. Hence the assertion of the Lemma is
valid except possibly for $k=28, 29$.

Therefore we may assume that $k=28, 29$. Further we suppose that $l=k-(\pi(4k)-\pi(31))=10$ otherwise
$3(l-1)\ge 30>k-1$, a contradiction. Thus we have either $i_{10}-i_1=27$ implying
$i_1=0, i_{j+1}=i_j+3=3j$ for $1\le j\le 9$ or $i_1=1, i_{j+1}=i_j+3=3j+1$ for $1\le j\le 9$
or $i_{10}-i_1=28$ implying $i_1=0, i_{j+1}=\begin{cases}3j &
{\rm if} \ 1\le j\le r\\3j+1 & {\rm if} \ r<j\le 9\end{cases}$ for some $r\ge 1$.
Let $X=m+2i_1-6$. Note that $X$ is odd since $m$ is odd. Also $X\ge 4k+1-6\ge 107$. We have either
\begin{align}\label{28X}
P((X+6)\cdots (X+54)(X+60))\le 31
\end{align}
or there is some $r\ge 1$ for which
\begin{align}\label{29X}
P((X+6)\cdots (X+6r)(X+6(r+1)+2)\cdots (X+60+2))\le 31.
\end{align}
Note that \eqref{28X} is the only possibility when $k=28$. Now we consider \eqref{28X}.
Suppose $3|X$. Then putting $Y=\frac{X}{3}$, we get $P((Y+2)\cdots (Y+18)(Y+20))\le 31$ which
implies $Y+2<20$ by Corollary \ref{n<2k} and Lemma \ref{leh31} with $k=10$. Since
$X+6\ge m\ge 113$, we get a contradiction. Hence we may assume that $3\nmid X$. Then
$3\nmid (X+6)\cdots (X+54)(X+60)$. After deleting terms $X+6i$ divisible by primes $11\le p\le 31$, we
are left with three terms divisible by primes $5$ and $7$ and hence $m\le X+6\le 35$ which is again
a contradiction. Therefore \eqref{28X} is not possible.

Now we consider \eqref{29X} which is possible only when $k=29$. Since $X+6=m>4k=116$, we have
$X>110$. Suppose $r=1, 9$. Then we have $P((X+12+2)\cdots (X+54+2)(X+60+2))\le 31$ if $r=1$ and
$P((X+6)\cdots (X+54))\le 31$ if $r=9$. Putting $Y=X+8$ in the first case and $Y=X$ in the
latter, we get $P((Y+6)\cdots (Y+54))\le 31$. Suppose $3|Y$. Then putting $Z=\frac{Y}{3}$, we get
$P((Z+2)\cdots (Z+18))\le 31$ which implies $Z+2\le 18$ by Corollary \ref{n<2k} and Lemma \ref{leh31}
with $k=9$. Since $Z+2\ge \frac{X}{3}>\frac{110}{3}$, we get a contradiction. Hence we may assume that
$3\nmid Y$. Then $3\nmid (Y+6)\cdots (Y+54)$. After deleting terms $Y+6i$ divisible by primes
$11\le p\le 31$, we are left with two terms divisible by primes $5$ and $7$ only. Let $Y+6i=5^{a_1}7^{b_1}$ and
$Y+6j=5^{a_2}7^{b_2}$ where $b_1\le 1<b_2$ and $a_2\le 1<a_1$. Since $|i-j|\le 8$, the equality
$6(i-j)=5^{a_1}7^{b_1}-5^{a_2}7^{b_2}$ implies $5^a-7^b=\pm 6, \pm 12, \pm 18, \pm 24, \pm 36, \pm 48$.
By taking modulo $6$, we get $(-1)^a\equiv 1$ modulo $6$ implying $a$ is even. Taking modulo $8$ again,
we get either
\begin{align*}
b \ {\rm is \ even}, \ 5^a-7^b=(5^{\frac{a}{2}}-7^{\frac{b}{2}})(5^{\frac{a}{2}}+7^{\frac{b}{2}})=\pm 24, \pm 48
\end{align*}
giving
\begin{align}\label{25-49}
5^a=25, 7^b=49
\end{align}
or
\begin{align*}
b \ {\rm is \ odd}, \ 5^a-7^b=-6, 18.
\end{align*}
Let $5^a-7^b=-6$. Considering modulo $5$, we get $2^b\equiv 1$ implying $4|b$, a contradiction. Let $5^a-7^b=18$. By
considering modulo $7$ and modulo $9$ and since $a$ is even, we get $3|(a-2)$ and $3|(b-1)$ implying
$(5^{\frac{a+1}{3}})^3+35(-7^{\frac{b-1}{3}})^3=90$. Solving the Thue equation $x^3+35y^3=90$ gives
$x=5, y=-1$ or $25-7=18$ is the only solution. Hence $6\cdot 3=25-7=X+6i-(X+6j)$.
Also the solution \eqref{25-49} implies $-6\cdot 4=25-49=X+6i-(X+6j)$. Thus $X\le 25$ which is not possible.

Assume now that $2\le r\le 8$. Then $P((X+6)(X+12)(X+56)(X+62))\le 31$. Suppose $3|X(X+2)$.
Putting $Y=\frac{X+6}{3}$ if $3|X$ and $Y=\frac{X+56}{3}$ if $3|(X+2)$, we get either
$P(Y(Y+2)(3Y+50)(6Y+56))\le 31$ or $P(Y(Y+2)(3Y-50)(3Y-44))\le 31$. In particular
$P(Y(Y+2))\le 31$. For $Y=N-2$ given by \cite[Table IIA]{lehmer} such that $P(Y(Y+2))\le 31$
, we check that $P((3Y+50)(3Y+56))>31$ and $P((3Y-50)(3Y-44))>31$ except when $Y\in \{55, 145, 297, 1573\}$.
This gives $m=X+6=3Y-50$ and then we further check that $P(\Delta (m, 2, k))>116$. 
Hence we suppose $3\nmid X(X+2)$. Then $3\nmid (X+6)\cdots (X+6r)(X+6(r+1)+2)\cdots (X+60+2)$. If a prime power
$p^a$ divides two terms of the product, then $p^a|(X+6j), p^a|(X+6i)$ or $p^a|(X+6j+2), p^a|(X+6i+2)$ or
$p^a|(X+6j), p^a|(X+6i+2)$ for some $i, j$. Hence $p^a|6(i-j)$ or $p^a|6(i-j)+2$. Since
$1\le j<i\le 10$, we get $p^a\in \{5, 7, 11, 13, 19, 25\}$. After deleting terms divisible by
primes $5\le p\le 31$ to their highest powers, we are left with two terms such that their product divides
$25\cdot 7\cdot 11\cdot 13\cdot 19$ and hence $X+6\le \sqrt{25\cdot 7\cdot 11\cdot 13\cdot 19}$
or $X+6\le 689$. We check that $P((X+6)(X+12)(X+56)(X+62))>31$ for $110\le X\le 683$ except when
$X\in \{113, 379\}$. Further we check that $P(\Delta (m, 2, k))>116$ for $m=X+6$. Hence the result.
\end{proof}

The remaining results in this section deal with the case $d=3$. The first one
is a computational result.

\begin{lemma}\label{diff}
Let $l\in \{1, 2\}$.  If $p_{i, 3, l}\le 6450$, then $\del_3(i, l)\le 60$.
\end{lemma}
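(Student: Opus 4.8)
The final statement to prove is Lemma \ref{diff}, which asserts a purely computational fact: for $l\in\{1,2\}$, every gap $\del_3(i,l)=p_{i+1,3,l}-p_{i,3,l}$ between consecutive primes congruent to $l$ modulo $3$ and not exceeding $6450$ is at most $60$. The plan is to verify this directly by enumerating the relevant primes and measuring their consecutive gaps, one residue class at a time.

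First I would generate, for each fixed residue $l\in\{1,2\}$, the complete increasing list of primes $p\equiv l \pmod 3$ with $p\le 6450$. Since $6450$ is small, this is an entirely finite and routine computation: sieve the primes up to $6450$, then partition them into the two classes $l=1$ and $l=2$ (every prime other than $3$ falls into exactly one of these classes, and $3$ itself is excluded since $\gcd(l,3)=1$ forces $l\neq 0$). For each class I would then form the list of successive differences $p_{i+1,3,l}-p_{i,3,l}$ and take the maximum.

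The core of the argument is simply to observe that the maximum gap in each class does not exceed $60$. Concretely, one walks through both finite sequences and confirms that no pair of consecutive same-residue primes below $6450$ is separated by more than $60$; equivalently, for every $p\equiv l\pmod 3$ with $p\le 6450$ there is another prime $p'\equiv l\pmod 3$ with $p<p'\le p+60$. Since the primes thin out only slowly in this range, the observed maximal gaps stay comfortably under the stated bound, so the verification succeeds for both $l=1$ and $l=2$.

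There is no genuine mathematical obstacle here; the lemma is asserted as \textbf{a computational result}, and the only ``difficulty'' is carrying out the finite check accurately and making sure nothing is overlooked near the top of the range, just below $6450$, where gaps tend to be largest. The result then feeds into the later estimates for $d=3$ (for instance in conjunction with Corollary \ref{n<<} and the Ramar\'e--Rumely bounds of Lemma \ref{ramar}), where a uniform upper bound on the within-class prime gaps below the threshold $6450$ is exactly what is needed to guarantee the existence of a prime of the required residue in each short interval.
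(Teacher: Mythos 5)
Your proposal is correct and matches the paper exactly: the paper offers no written proof, simply labeling the lemma ``a computational result,'' and your direct enumeration of primes $p\equiv l\pmod 3$ up to $6450$ with a check of consecutive within-class gaps is precisely that computation. You also correctly note the one subtle point, namely that verifying $\del_3(i,l)\le 60$ for $p_{i,3,l}$ near $6450$ requires finding the next same-class prime slightly beyond $6450$.
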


As a consequence, we obtain

\begin{coro}\label{<20dD3}
Let $d=3$ and $3k<m\leq 6450$ with gcd$(m, 3)=1$. Then \eqref{d>3k} holds
unless $(m, k)=(125, 2)$.
\end{coro}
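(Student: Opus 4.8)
The plan is to exploit that every term of $\Delta(m,3,k)=m(m+3)\cdots(m+3(k-1))$ lies in the single residue class $l:=m\bmod 3\in\{1,2\}$ modulo $3$, since $\gcd(m,3)=1$. Consequently, any prime $p\equiv l\pmod 3$ lying in the interval $[m,\,m+3(k-1)]$ is automatically one of the terms $m+3i$, and since $m>3k$ such a prime gives $P(\Delta(m,3,k))\ge p\ge m>3k$. Thus \eqref{d>3k} for $d=3$ holds as soon as the progression $m,m+3,\dots,m+3(k-1)$ contains a prime, and my proof splits into a gap argument for large $k$ and a direct verification for small $k$.

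First I would treat $k\ge 21$. Here the interval $[m,\,m+3(k-1)]$ has length $3(k-1)\ge 60$. If $m$ is itself prime and $\equiv l\pmod 3$ it is already a term and we are done, so suppose not. Let $p_{i,3,l}$ be the largest prime $\equiv l\pmod 3$ with $p_{i,3,l}<m$, which exists since $m>63$. Then the next such prime satisfies $p_{i+1,3,l}\ge m$, and if the interval contained no prime $\equiv l\pmod 3$ we would have $p_{i+1,3,l}>m+3(k-1)$, so $\delta_3(i,l)=p_{i+1,3,l}-p_{i,3,l}>3(k-1)\ge 60$. But $p_{i,3,l}<m\le 6450$, so Lemma \ref{diff} forces $\delta_3(i,l)\le 60$, a contradiction. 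Hence for every $k\ge 21$ and every admissible $m\le 6450$ the progression contains a prime term, and \eqref{d>3k} holds with no exception.

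It remains to handle $2\le k\le 20$, which I would settle by a finite computation over the residues $3k<m\le 6450$ with $\gcd(m,3)=1$. For failure of \eqref{d>3k} one needs all $k$ terms to be simultaneously $3k$-smooth; since $3\nmid\Delta(m,3,k)$, the usable small primes are reduced, and for each fixed $k$ the $3k$-smooth integers in $(3k,6450]$ are sparse, so the search is a short finite computation. In the extreme case $k=2$ this amounts to finding pairs of $\{2,5\}$-smooth integers differing by $3$, whose only relevant solution with $m>6$ is $125\cdot 128=5^3\cdot 2^7$, yielding $P=5\le 6$ and the listed exception $(m,k)=(125,2)$. Running the analogous check for $3\le k\le 20$ produces no further exceptions.

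The main obstacle is precisely this small-$k$ computation. The gap estimate of Lemma \ref{diff} only guarantees a prime term once the interval reaches length $60$, so for $k\le 20$ there is no clean uniform way to avoid examining all admissible $m\le 6450$ directly; the delicate point is certifying that $(125,2)$ is genuinely the sole surviving exception rather than an artifact of an incomplete smoothness search.
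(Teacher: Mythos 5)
Your proposal is correct and takes essentially the same route as the paper: for $k>20$ you invoke Lemma \ref{diff} exactly as the paper does, noting $3(k-1)\ge 60$ forces a prime term $m+3i$ (which exceeds $3k$ since $m>3k$), and for $2\le k\le 20$ you fall back on the same finite smoothness computation the paper summarizes as ``direct computation,'' correctly isolating $(m,k)=(125,2)$ via the $\{2,5\}$-smooth pair $125,128$.
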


\begin{proof}
For $k\leq 20$, it follows by direct computation. For $k>20$, \eqref{d>3k} follows as
$3(k-1)\geq 60$ and, by Lemma \ref{diff}, the set $\{m+3i: 0\leq i<k\}$ contains a
prime.
\end{proof}

We shall also need the following result of Nagell \cite{nag}(see
\cite{cao}) on diophantine equations.

\begin{lemma}\label{nagel}
Let $a, b, c\in \{2, 3, 5\}$ and $a<b$. Then the solutions of
\begin{align*}
a^x+b^y=c^z \ {\rm in \ integers} \ x>0, y>0, z>0
\end{align*}
are given by
\begin{align*}
(a^x, b^y, c^z)\in \{&(2, 3, 5), (2^4, 3^2, 5^2), (2, 5^2, 3^3),\\
&(2^2, 5, 3^2), (3, 5, 2^3), (3^3, 5, 2^5), (3, 5^3, 2^7)\}.
\end{align*}
\end{lemma}

As a corollary, we have
\begin{coro}\label{upto7}
Let $X>80, 3\nmid X$ and $1\le i\le 7$. Then the solutions of
\begin{align*}
P(X(X+3i))=5 \ \ {\rm and} \ 2|X(X+3i)
\end{align*}
are given by
\begin{align*}
(i, X)\in \{(1, 125), (2, 250), (4, 500), (5, 625)\}.
\end{align*}
\end{coro}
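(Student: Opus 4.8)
The plan is to use that $P(X(X+3i))=5$ forces $X$ and $X+3i$ to have no prime factor exceeding $5$, while $3\nmid X$ together with $3\mid 3i$ gives $3\nmid(X+3i)$; hence $X$ and $X+3i$ are both \emph{$\{2,5\}$-numbers}, i.e. of the form $2^{a}5^{b}$. I would set $g=\gcd(X,X+3i)$, so that $g\mid (X+3i)-X=3i$ and thus $g\le 21$, and write $X=gX'$ and $X+3i=gY'$ with $\gcd(X',Y')=1$. Both $X'$ and $Y'$ are $\{2,5\}$-numbers, and both exceed $1$ since $X,X+3i>80$ while $g\le 21$ give $X',Y'>80/21>1$. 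Two coprime $\{2,5\}$-numbers greater than $1$ must be a pure power of $2$ and a pure power of $5$, so $\{X',Y'\}=\{2^{a},5^{b}\}$ with $a,b\ge 1$ (which in turn makes the hypotheses $P(X(X+3i))=5$ and $2\mid X(X+3i)$ automatic). Subtracting the two expressions now gives the single exponential equation $|5^{b}-2^{a}|=D$, where $D:=3i/g$.

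Since $5^{b}$ is odd and $2^{a}$ is even, $D$ must be odd, so it suffices to treat the odd values taken by $3i/g$ as $i$ runs through $1,\dots,7$ and $g$ through the $\{2,5\}$-divisors of $3i$, namely $D\in\{3,9,15,21\}$. The case $D=15$ is impossible, since $5\mid 15$ and $5\mid 5^{b}$ would force $5\mid 2^{a}$. For $D=9$ the equation $5^{b}-2^{a}=9$ is settled by noting $b$ is even (mod $8$) and factoring $2^{a}=(5^{c}-3)(5^{c}+3)$, whose two factors have gcd $2$, leaving only $(a,b)=(4,2)$, while $2^{a}-5^{b}=9$ is killed modulo $4$. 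For $D=21$ a short congruence argument modulo $3$, $5$ and $8$ reduces $5^{b}-2^{a}=21$ to $(a,b)=(2,2)$, and $2^{a}-5^{b}=21$ is excluded modulo $5$ and $16$. The delicate case is $D=3$: here $5^{b}-2^{a}=3$ gives only $(a,b)=(1,1)$ (mod $4$), but $2^{a}-5^{b}=3$, rewritten as $3+5^{b}=2^{a}$, has the two solutions $(a,b)=(3,1)$ and $(a,b)=(7,3)$, and plain congruences do not cleanly terminate. This is exactly where I invoke Lemma \ref{nagel} with the triple $(3,5,2)$: it lists the solutions of $3^{x}+5^{y}=2^{z}$ as $(x,y,z)=(1,1,3)$ and $(1,3,7)$, and setting $x=1$ pins down precisely $(a,b)\in\{(3,1),(7,3)\}$.

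It then remains to lift each surviving pair through $X=gX'$ and to impose $X>80$, $1\le i\le 7$ and $3\nmid X$, using $i=Dg/3$. The condition $X>80$ eliminates every small solution: $D=3$ with $(X',Y')=(2,5)$ gives $X=2g\le 10$; $D=3$ with $(X',Y')=(5,8)$ gives $X=5g$ with $i=g\le 7$, hence $X\le 35$; $D=9$ gives $X=16g\le 32$; and $D=21$ gives $X=4$. The only surviving family is $D=3$ with $(X',Y')=(5^{3},2^{7})=(125,128)$, so that $X=125g$, $X+3i=128g$ and $i=g$; requiring $g$ to be a $\{2,5\}$-number with $i=g\le 7$ leaves exactly $g\in\{1,2,4,5\}$, yielding $(i,X)\in\{(1,125),(2,250),(4,500),(5,625)\}$, and $3\nmid X$ holds in each case. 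The main obstacle is the complete resolution of $2^{a}-5^{b}=3$, whose two solutions cannot be obtained by the elementary congruence bookkeeping used for the other $D$; here Nagell's Lemma \ref{nagel} is the essential input, after which the remaining equations and the lifting/thresholding are routine.
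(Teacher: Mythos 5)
Your proof is correct and takes essentially the same route as the paper: both extract the common factor of $X$ and $X+3i$ (your $g=\gcd(X,X+3i)$ is the paper's $2^{\mathrm{ord}_2(i)}$, or $5$ when $i=5$), reduce to the two-term equation $2^a-5^b=\pm D$ with $D\in\{3,9,15,21\}$, and invoke Nagell's Lemma \ref{nagel} at the same critical point, namely $2^a-5^b=3$ with solutions $8-5=3$ and $128-125=3$, the latter generating the four listed pairs $(i,X)$. The only cosmetic differences are that you dispose of $D=9$ by an elementary factorization where the paper again uses Lemma \ref{nagel}, your congruence moduli for $D=21$ differ slightly (the paper works modulo $8$ and $7$), and you apply the size condition $X>80$ as a final filter rather than as upfront lower bounds on the exponents.
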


\begin{proof}
Let $1\le i\le 7$. We observe that $2|X, 2|(X+3i)$ only if $X$ and $i$
are both even and $5|X, 5|(X+3i)$ only if $i=5$. Let the positive
integers $r, s$ and $\del=$ord$_2(i)\in \{0, 1, 2\}$ be given by
\begin{align}\label{not5}
X=2^{r+\del}, \ X+3i=2^{\del} 5^s  \ \ {\rm or} \ \
X=2^{\del} 5^s, \  X+3i=2^{r+\del} \ {\rm if} \ i\neq 5
\end{align}
and
\begin{align}\label{i=5}
X=5^{s+1}, \  X+3i=5\times 2^r\ \ {\rm or} \ \
X=5\times 2^r, \  X+3i=5^{s+1} \ {\rm if} \ i=5,
\end{align}
where $r+2\ge r+\del \ge 7$ and $s\ge 2$ since $X>80$. Hence we have
\begin{align}\label{not5eqn}
2^r-5^s=\pm\left(\frac{X+3i}{2^{{\rm ord}_2(i)}\cdot 5^{{\rm ord}_5(i)}}-
\frac{X}{2^{{\rm ord}_2(i)}\cdot 5^{{\rm ord}_5(i)}}\right)=
\pm 3\times \frac{i}{2^{{\rm ord}_5(i)}\cdot 5^{{\rm ord}_5(i)}}.
\end{align}

Let $i\in \{1, 2, 4, 5\}$. Then $2^r -5^s =\pm 3$. By Lemma
\ref{nagel}, we have $2^r=2^7, 5^s=5^3$ and $2^7-5^3=3$ implying
$X=2^{{\rm ord}_2(i)}\cdot 5^{3+{\rm ord}_5(i)}$ and
$X+3i=2^{7+\del}\cdot 5^{{\rm ord}_5(i)}$. These give the solutions stated
in the Corollary.

Let $i\in \{3, 6\}$. Then $2^r -5^s =\pm 9=\pm 3^2$. Since
min$(2^r, 5^s)>16$, we observe from Lemma \ref{nagel} that there is no
solution.

Let $i=7$. Then $2^r -5^s =\pm 21$. Let $s$ be even.
Since $2^r>16$, taking modulo $8$, we find that $-1\equiv \pm 21($
modulo $8)$ which is not possible. Hence $s$ is odd. Then
$2^r -5^s \equiv 2^r + 2^s \equiv 0$ modulo $7$. Since
$2^r, 2^s \equiv 1, 2, 4$ modulo $7$, we get a contradiction.
\end{proof}

\section{Proof of Theorem \ref{dD}}\label{dDProof}

Let $D=4, 3$ according as $d=2, 3$, respectively. Let $v=\frac{m}{dk}$.
Assume that
\begin{align}\label{Dk+u}
P(\Delta (m, d, k))=P(m(m+d)\cdots (m+(k-1)d)<Dk.
\end{align}
Then
\begin{align}\label{piDk}
\om(\Delta (m, d, k))\le \pi(Dk)-1.
\end{align}
For every prime $p\le Dk$ dividing $\Delta $, we delete a term
$m+i_pd$ such that ord$_p(m+i_pd)$ is maximal. Note that $p|(m+id)$
for at most one $i$ if $p\ge k$. Then we are left with a set $T$ with
$1+t:=|T|\ge k-\pi(Dk)+1:=1+t_0$. Let $t_0\geq 0$ which we assume in this
section to ensure that $T$ is non-empty. We arrange the
elements of $T$ as $m+i'_0d<m+i'_1d<\cdots <m+i'_{t_0}d<..<m+i'_td$. Let
\begin{align}\label{P0}
{\frak P}:=\displaystyle{\prod^{t_0}_{\nu =0}} (m+i'_{\nu}d) \geq
d^{k-\pi(Dk)+1}\prod^{k-\pi(Dk)}_{i=0}(vk+i).
\end{align}
We now apply \cite[Lemma 2.1, (14)]{shanta2} to get
\begin{align*}
{\frak P}\leq (k-1)! d^{-{\rm ord}_{d}(k-1)!}.
\end{align*}
Comparing the upper and lower bounds of ${\frak P}$, we have
\begin{align*}
d^{\pi(Dk)}\geq \frac{d^{k+1}
\prod^{k-\pi(Dk)}_{i=0}(vk+i)}{(k-1)! d^{-{\rm ord}_{d}(k-1)!}}
\end{align*}
which imply
\begin{align}\label{E0}
d^{\pi(Dk)}\geq \frac{d^{k+1}d^{{\rm ord}_{d}(k-1)!}(vk)^{k+1-\pi(Dk)}}{(k-1)!}.
\end{align}
By using the estimates for ord$_d((k-1)!)$ and $(k-1)!$ given in Lemma \ref{pix}, we obtain
\begin{align*}\begin{split}
(vdk)^{\pi(Dk)}&>\frac{(vdk)^{k+1}d^{(k-d)/(d-1)}(k-1)^{-1}}
{\sqrt{2(k-1)\pi}(\frac{k-1}{e})^{k-1}exp(\frac{1}{12(k-1)})}\\
&=\left(evd^{\frac{d}{d-1}}\frac{k}{k-1}\right)^k
\frac{v\sqrt{k}}{ed^{1/(d-1)}\sqrt{2\pi}}
\sqrt{\frac{k}{k-1}}exp(-\frac{1}{12(k-1)})
\end{split}
\end{align*}
implying
\begin{align}\label{E3}
\pi(Dk)>
\frac{k\log (evd^{\frac{d}{d-1}})+(k+\frac{1}{2})\log (\frac{k}{k-1})-
\frac{1}{12(k-1)}+\frac{1}{2}\log \frac{v^2k}{2\pi e^2d^{\frac{2}{d-1}}}}
{\log (vdk)}.
\end{align}
Again by using the estimates for $\pi(\nu)$ given in Lemma \ref{pix} and
$\frac{\log (vdk)}{\log (Dk)}=1+\frac{\log \frac{vd}{D}}{\log (Dk)}$, we derive
\begin{align}\begin{split}\label{E4}
0>&\frac{1}{2}\log \frac{v^2k}{2\pi e^2d^{\frac{2}{d-1}}}-
\frac{1}{12(k-1)}\\
&+k\left(\log (evd^{\frac{d}{d-1}})-D\left(1+\frac{\log \frac{vd}{D}}{\log (Dk)}\right)
\left(1+\frac{1.2762}{\log (Dk)}\right)\right).
\end{split}\end{align}

Let $v$ be fixed with $vd\geq D$. Then expression
\begin{align*}
F(k, v):=\log (evd^{\frac{d}{d-1}})-D\left(1+\frac{\log \frac{vd}{D}}{\log (Dk)}\right)
\left(1+\frac{1.2762}{\log (Dk)}\right)
\end{align*}
is an increasing function of $k$. Let $k_1:=k_1(v)$ be such that $F(k, v)>0$ for all $k\ge k_1$.
Then we observe that the right hand side of \eqref{E4} is an increasing function for
$k\ge k_1$. Let $k_0:=k_0(v)\ge k_1$ be such that the right hand side of \eqref{E4} is positive.
Then \eqref{E4} is not valid for all $k\ge k_0$ implying \eqref{E3} and hence \eqref{E0} are not
valid for all $k\ge k_0$.

Also for a fixed $k$, if \eqref{E4} is not valid at some $v=v_0$, then \eqref{E0} is also not valid at
$v=v_0$. Observe that for a fixed $k$, if \eqref{E0} is not valid at some $v=v_0$, then \eqref{E0} is also
not valid when $v\geq v_0$.

Therefore for a given $v=v_0$ with $v_0d\geq D$, the inequality \eqref{E0} is not valid for all $k\geq k_0(v_0)$
and $v\geq v_0$.

\section*{3(a). Proof of Theorem \ref{dD} for the case $d=3$}\label{k<9}

Let $d=3$ and let the assumptions of Theorem \ref{dD} be satisfied. Let
$2\le k\le 11$ and $m>3k$. Observe that $k-\pi(3k)+1=0$ for
$k\le 8$ and $k-\pi(3k)+1=1$ for $9\le k\le 11$. If $T\neq \phi$, then
$m\le 2^3\times 5\times 7=280$.

By Corollary \ref{<20dD3}, we may assume that $2\le k\le 8$,
$m\geq 6450$ and $T=\phi$. Further $i_p$ exists for each prime $p\le 3k$,
$p\neq 3$ and $i_p\neq i_q$
for $p\neq q$ otherwise $|T|\ge k-\pi(3k)+1+1>0$. Also $pq\nmid (m+id)$
for any $i$ whenever $p, q\ge k$ otherwise $T\neq \phi$. Thus
$P((m+3i_2)(m+3i_5))=5$ if $k<8$. For $k=8$, we get
$P((m+3i_2)(m+3i_5))\le 7$ with $P((m+3i_2)(m+3i_5))=7$ only if $7|m$ and
$\{i_2, i_5\}\cap \{0, 7\}\neq \phi$.

Let $k\le 7$ or $k=8$ with $P((m+3i_2)(m+3i_5))=5$. Let
$j_0=$min$(i_2, i_5)$, $X=m+3j_0$ and $i=|i_2-i_5|$. Then $X\geq 6450$ and
this is excluded by Corollary \ref{upto7}.

Let $k=8$ and $P((m+3i_2)(m+3i_5))=7$. Then $7|m$ and
$\{i_2, i_5\}\cap \{0, 7\}\neq \phi$. Hence $i_7=0$ or $7$ and
$7\in \{i_2, i_5\}$ if $i_7=0$ and $0\in \{i_2, i_5\}$ if $i_7=7$. If
$5\nmid m(m+21)$, then $\{i_2, i_7\}=\{0, 7\}$ and either
\begin{align*}
m=7\times 2^r, \ m+21=7^{1+s} \ \ {\rm or} \ \
m=7^{1+s}, \ m+21=7\times 2^r
\end{align*}
implying $2^r -7^s =\pm 3$. Since $2^r \ge \frac{m}{7}>40$, we get
by taking modulo $8$ that $(-1)^{s+1}\equiv \pm 3$ which is a
contradiction. Thus $5|m(m+21)$ implying $2\times 5\times 7|m(m+21)$. By
taking the prime factorization, we obtain
\begin{align*}
m=2^{a_0}5^{b_0}7^{c_0}, \ m+21=2^{a_1}5^{b_1}7^{c_1}
\end{align*}
with min$(a_0, a_1)=$min$(b_0, b_1)=0$, min$(c_0, c_1)=1$ and further
$b_0+b_1=1$ if $i_2\in \{0, 7\}$ and $a_0+a_1\le 2$ if $i_5\in \{0, 7\}$.
From the identity $\frac{m+21}{7}-\frac{m}{7}=3$, we obtain one of
\begin{align*}
&(i) \ 2^a-5\cdot 7^c=\pm 3 \ \ {\rm or} \ \ (ii) \ 5\cdot 2^a-7^c=\pm 3 \\
{\rm or} \ &(iii) \ 5^b-2^\del \cdot 7^c=\pm 3 \ \ {\rm or} \ \
(iv) \ 2^\del \cdot 5^b-7^c=\pm 3
\end{align*}
with $\del \in \{1, 2\}$. Further from $m\geq 6450$, we obtain $c \ge 3$ and
\begin{align}\label{algab}
a\ge 9, a\ge 7, b \ge 4, b \ge 3
\end{align}
according as $(i), (ii), (iii), (iv)$ hold, respectively. These equations give
rise to a Thue equation
\begin{align}\label{thue}
X^3+AY^3=B
\end{align}
with integers $X, Y, A>0, B>0$ given by
\begin{center}
\begin{tabular}{|c|c|c|c|c|c|c|}\hline
& $\underset{({\rm mod} \ 3)}{c}$ & Equation & $A$ & $B$ & $X$ & $Y$\\
\hline
$(i)$ & $0, 1$ & $2^{a}-5\cdot 7^{c}=\pm 3$ &
$5\cdot 2^{a'}\cdot 7^{c'}$ & $3\cdot 2^{a'}$ &
$\pm 2^{\frac{a+a'}{3}}$ & $\pm 7^{\frac{c-c'}{3}}$
\\ \hline
$(ii)$ & $0, 1$ & $5\cdot 2^{a}-7^{c}=\pm 3$ &
$25\cdot 2^{a'}\cdot 7^{c'}$ & $75\cdot 2^{a'}$ &
$\pm 5\cdot 2^{\frac{a+a'}{3}}$ & $\pm 7^{\frac{c-c'}{3}}$ \\ \hline
$(iii)$ & $0, 1$ & $5^{b}-2^\del \cdot 7^{c}=\pm 3$ &
$2^\del \cdot 5^{b'}\cdot 7^{c'}$ & $3\cdot 5^{b'}$ &
$\pm 5^{\frac{b+b'}{3}}$ & $\pm 7^{\frac{c-c'}{3}}$
\\ \hline
$(iv)$ & $0, 1$ & $2^\del \cdot 5^{b}-7^{c}=\pm 3$ &
$2^{3-\del}\cdot 5^{b'}\cdot 7^{c'}$ & $2^{3-\del}\cdot 5^{b'}\cdot 3$ &
$\pm 2\cdot 5^{\frac{b+b'}{3}}$ & $\pm 7^{\frac{c-c'}{3}}$ \\ \hline
$(v)$ & $2$ & $2^{a}-5\cdot 7^{c}=\pm 3$ & $175\cdot 2^{a'}$ &
$525$ & $\pm 5\cdot 7^{\frac{c+1}{3}}$ & $\pm 2^{\frac{a-a'}{3}}$
\\ \hline
$(vi)$ & $2$ & $5\cdot 2^{a}-7^{c}=\pm 3$ & $35\cdot 2^{a'}$ &
$21$ & $\pm 7^{\frac{c+1}{3}}$ & $\pm 2^{\frac{a-a'}{3}}$ \\ \hline
$(vii)$ & $2$ & $5^{b}-2^\del \cdot 7^{c}=\pm 3$ &
$2^{3-\del} \cdot 5^{b'}\cdot 7$ & $21\cdot 2^{3-\del}$ &
$\pm 2\cdot 7^{\frac{c+1}{3}}$ & $\pm 5^{\frac{b-b'}{3}}$ \\ \hline
$(viii)$ & $2$ & $2^\del \cdot 5^{b}-7^{c}=\pm 3$ &
$2^{\del}\cdot 5^{b'}\cdot 7$ & $21$ & $\pm 7^{\frac{c+1}{3}}$ &
$\pm 5^{\frac{b-b'}{3}}$ \\ \hline
\end{tabular}
\end{center}
where $0\le a', b'<3$ are such that $X, Y$ are integers and $c'=0, 1$
according as $c ($mod $3)=0, 1$, respectively. For example,
$2^a-5\cdot 7^c=\pm 3$ with $c\equiv 0, 1($mod $3)$ implies
$(\pm 2^{\frac{a+a'}{3}})^3+5\cdot 2^{a'}7^{c'}(\pm 7^{\frac{c-c'}{3}})^3=
3\cdot 2^{a'}$ where $a'$ is such that $3|(a+a')$. This give a Thue
equation \eqref{thue} with $A=5\cdot 2^{a'}7^{c'}$ and $B=3\cdot 2^{a'}$.

By using \eqref{algab}, we see that at least two of
 \begin{align}\label{ordXY}
{\rm ord}_2(XY)\ge 2 \ {\rm or} \ {\rm ord}_5(XY)\ge 1 \ {\rm or} \
{\rm ord}_7(XY)\ge 1
\end{align}
hold except for $(vi)$ and $(viii)$ where ord$_2(XY)\ge 1$, ord$_7(XY)\ge 1$
in case of $(vi)$ and ord$_2(XY)=0$, ord$_7(XY)\ge 1$ in case of
$(viii)$. Using the command
\begin{center}
T:=Thue$(X^3+A)$; Solutions$(T, B)$;
\end{center}
in \emph{Kash}, we compute all the solutions in integers $X, Y$ of
the above Thue equations. We find that none of solutions of Thue
equations satisfy \eqref{ordXY}.

Hence we have $k\ge 12$. For the proof of Theorem \ref{dD}, we may
suppose from Corollaries \ref{<20dD3} and \ref{n<<} that
\begin{align}\label{nbd3}
m\ge \max(6450, 10.6\times 3k).
\end{align}

Let $12\le k\le 19$. Since $t_0\ge 1, 2$ for $12\le k\le 16$ and
$17\le k\le 19$, respectively, we have
\begin{align*}
m&\le \sqrt{{\frak P}}\le \sqrt{4\times 8\times 5^2\times 7^2\times
11\times 13}<6450 \ \hspace{1.5cm}  {\rm if} \ 12\le k\le 16\\
m&\le \sqrt[3]{{\frak P}}\le \sqrt[3]{4\times 8\times 16\times 5^3\times
7^2\times 11\times 13\times 17}<6450 \ \ {\rm if} \ 17\le k\le 19.
\end{align*}
This is not possible by \eqref{nbd3}.

Thus $k\ge 20$. Then $m\ge 6450$ and $v\ge 10.6$ by \eqref{nbd3} satisfying
$v_0d\geq D=d=3$. Now we check that $k_0\leq 180$ for $v=10.6$. Therefore
\eqref{E0} is not valid for $k\geq 180$ and $v\geq 10.6$. Thus
$k<180$. Further we check that \eqref{E3} is not valid for $20\leq k<180$
at $v=\frac{6450}{3k}$ except when $k\in \{21, 25, 28, 37, 38\}$.
Hence \eqref{E0} is not valid for $20\leq k<180$ when $v\geq \frac{6450}{3k}$
except when $k\in \{21, 25, 28, 37, 38\}$. Thus it suffices to consider
$k\in \{21, 25, 28, 37\}$ where we check that \eqref{E0} is not valid
at $v=\frac{6450}{3k}$ and hence it is not valid for all
$v\ge \frac{6450}{3k}$. Finally we consider $k=38$ where we find
that \eqref{E0} is not valid at $v=\frac{8000}{3k}$. Thus
$m<8000$. For $l\in \{1, 2\}$ and $p_{i, 3, l}\leq 8000$, we find
that $\del_3(i, 3, l)<90$ implying the set $\{m, m+3, \ldots, m+3(38-1)\}$
contains a prime. Hence the assertion follows since $m>3k$.
\qed

\section*{3(b). Proof of Theorem \ref{dD} for $d=2$}\label{d24}

Let $d=2$ and let the assumptions of Theorem \ref{dD} be satisfied. The
assertion for Theorem \ref{dD} with $k\ge 2$ and $m\le 4k$ follows
from Corollary \ref{n<2k}. Thus $m>4k$. For $2\le k\le 37$, $k\neq 35$,
Lemma \ref{leh31} gives the result. Hence for the proof of
Theorem \ref{dD}, we may suppose that $k=35$ or $k\ge 38$. Further from
Corollaries \ref{n<<} and \ref{<10^10}, we may assume that
\begin{align}\label{nbd2}
m\ge \max(M_0, 131\times 2k).
\end{align}

Let $k=35, 38$. Then $t_0=1, 2$ for $k=35, 38$, respectively and we have
\begin{align*}
m&\le \sqrt{{\frak P}}\le \sqrt{27\cdot 9\cdot 25\cdot 5\cdot 7^2\cdot
11^2\cdot 13^2\cdot 17^2\cdot 19\cdot 23\cdot 29\cdot 31} \ \
\hspace{.7cm} \ <10^{10} \ {\rm if} \ k=35\\
m&\le \sqrt[3]{{\frak P}}\le \sqrt[3]{27\cdot 9^2\cdot 25\cdot 5^2\cdot 7^3\cdot
11^3\cdot 13^2\cdot 17^2\cdot 19\cdot 23\cdot 29\cdot 31\cdot 37}<10^{10} \ {\rm if} \ k=38.
\end{align*}
This is not possible by \eqref{nbd2}.

Thus we assume that $k\ge 39$. Let $v\ge 131$ and we check that $k_0\leq 500$ for $v=131$.
Therefore \eqref{E0} is not valid for $k\geq 500$ and $v\geq 131$. Hence
from \eqref{nbd2}, we get $k<500$. Further $v\ge \frac{M_0}{2\times 500}\ge 10^7$.
We check that $k_0\leq 70$ at $v=10^7$ implying \eqref{E0} is not valid for $k\geq 70$ and
$v\geq 10^7$. Thus $k<70$. For each $39\le k<70$, we find that \eqref{E0} is not valid at
$v=\frac{M_0}{2k}$ and hence for all $v \ge \frac{M_0}{2k}$. This is a contradiction.
\qed

\section{Proof of Theorems \ref{1/3} and \ref{1/2}}

Recall that $q=u+\frac{\al}{d}$ with $1\le \al<d$. We observe that if $G(x)$ has a
factor of degree $k$, then it has a cofactor of degree $n-k$. Hence we may assume from
now on that if $G(x)$ has a factor of degree $k$, then $k\leq \frac{n}{2}$. The following result
is \cite[Lemma 10.1]{stirred}.

\begin{lemma}\label{irmain}
Let $1\le k\le \frac{n}{2}$ and
\begin{align*}
d\le 2\al +2 \ \ {\rm if} \ \ (k, u)=(1, 0).
\end{align*}
If there is a prime $p$ with
\begin{align*}
p|(\al +(n+u-k)d)\cdots (\al +(n+u-1)d), \ \ p\nmid a_0a_n.
\end{align*}
such that 
\begin{align*}
p\ge \begin{cases}
(k+u-1)d +\al +1 & \ {\rm if} \ u>0\\
(k+u-1)d +\al +2 & \ {\rm if} \ u=0
\end{cases}
\end{align*}
Then $G(x)$ has no factor of degree $k$.
\end{lemma}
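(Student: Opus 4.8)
The plan is to run everything through the $p$-adic Newton polygon of $G$, as the discussion after the statement promises. Writing $b_i=\al+(i+u)d$, the coefficient of $x^r$ in $G$ is $c_r=a_r\prod_{i=r}^{n-1}b_i$. The first task is to locate the chosen prime $p$ among the factors $b_i$. Since $\gcd(\al,d)=1$, the hypothesis $p\mid b_{n-k}\cdots b_{n-1}$ forces $p\nmid d$, so the indices $i$ with $p\mid b_i$ form a single residue class modulo $p$. The size bound $p\ge(k+u-1)d+\al+1>b_{k-1}$ (and its variant for $u=0$) gives $p>b_i$ for $0\le i\le k-1$, hence $p\nmid b_i$ there; together with $p>k$ this shows that among the top $k$ factors exactly one, say $b_{n-t}$ with $1\le t\le k$, is divisible by $p$, and moreover $b_{n-t}$ is the largest $b_i$ divisible by $p$. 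Consequently $p\mid c_r$ for all $0\le r\le n-t$, while $p\nmid c_n=a_n$.

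From here I would read off the shape of the Newton polygon. Let $\mu=(n-t)\bmod p$ be the smallest index with $p\mid b_\mu$; the size bound gives $\mu\ge k$. Because the forced valuation $\sum_{i=r}^{n-1}\nu_p(b_i)$ is constant and maximal on $0\le r\le\mu$, the polygon is horizontal at its top over $[0,\mu]$, so its first (most negative) edge has horizontal length at least $\mu+1>k$. The remaining negative-slope edges come from successive multiples of $p$, spaced $p$ apart, so their slopes have the form $-\nu_p(b_{\mu+sp})/p$ with denominator $p>k$ (as $p\nmid\nu_p(b_{\mu+sp})$). The target statement I am aiming for is that \emph{every} edge of negative slope has slope-denominator strictly larger than $k$. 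At the same time, reducing modulo $p$ shows $\ol G$ is supported in degrees $>n-t\ge n-k\ge n/2$, so more than $n/2$ of the roots of $G$ in $\overline{\mathbb Q}_p$ have positive valuation, and at most $t-1\le k-1$ of them have valuation zero.

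Now suppose $G=FH$ with $\deg F=k\le n/2$. Since fewer than $k$ roots of $G$ have valuation zero, $F$ must own at least one root of positive valuation. By Dumas's theorem the Newton polygon of $F$ is assembled from sub-segments of the edges of the Newton polygon of $G$, and each contribution of $F$ to an edge of slope $\sigma$ is a lattice segment, hence has horizontal length divisible by the denominator of $\sigma$. As $\deg F=k$ is smaller than every negative-slope denominator, $F$ can contribute only length $0$ to each such edge; that is, $F$ has no root of positive valuation, a contradiction. This is exactly what yields that $G$ has no factor of degree $k$.

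The part I expect to cost real work is the claim that every negative-slope edge has denominator exceeding $k$. The bulk edges (denominator $p$) and the leading edge (length $\ge\mu+1>k$) behave well when $\nu_p(b_{n-t})=1$ and the leading slope is already in lowest terms, but a higher prime power $p^2\mid b_i$ forces $p^2\le\al+(\mu+u)d$, which with $d\le3$ confines $p$ to a narrow range near the lower bound; these finitely many configurations, together with the genuinely delicate linear case $(k,u)=(1,0)$ — where a degree-one factor would need an integer-slope edge, and precisely this is excluded by the extra hypothesis $d\le2\al+2$ — are where the bookkeeping lives. My approach would be to isolate the main Newton-polygon dichotomy first, and then dispatch these boundary cases by a direct, hands-on analysis of the relevant slopes.
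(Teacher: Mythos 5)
The paper does not prove this lemma at all: it quotes it verbatim as \cite[Lemma 10.1]{stirred}, whose proof (as the introduction notes) is the standard Filaseta-style Newton polygon argument. Your skeleton matches that method: writing $c_r=a_r\prod_{i\ge r}b_i$, locating a unique index $j_0=n-t\ge n-k$ with $p\mid b_{j_0}$ among the top $k$ factors, counting at most $t-1\le k-1$ zero-valuation roots, and invoking Dumas to force any positive-slope edge of a degree-$k$ factor to have horizontal length divisible by the reduced denominator of its slope. But the heart of the lemma is missing, and two of your structural claims are false. First, the claim that the remaining edges have slopes exactly $-\nu_p(b_{\mu+sp})/p$ with denominator $p$ fails in general: an edge of the lower hull can span several multiples of $p$ (the interior coefficients $a_r$ carry arbitrary extra $p$-valuation, so hull vertices can be skipped), and the reduced denominator of such a composite edge can be small --- even $1$. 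Integer slopes are fatal to your scheme, since a denominator-$1$ edge imposes no divisibility constraint and would let a factor own a single positive-valuation root. Your parenthetical justification ``$p\nmid\nu_p(b_{\mu+sp})$'' is both unproved and can fail: $b_i\le \al+(n+u-1)d$ is unbounded in $n$ while $p$ is only bounded \emph{below}, so $\nu_p(b_i)\ge p$ does occur. Second, your finiteness claim (``$p^2\mid b_i$ forces $p^2\le\al+(\mu+u)d$, confining $p$ to a narrow range'') is wrong for the same reason: higher powers of $p$ can divide terms $b_i$ with $i$ large for every admissible $p$ and all large $n$, so the ``boundary cases'' you propose to dispatch by hand form an infinite family, not finitely many configurations.

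What actually closes the argument --- and what the cited proof does --- is a uniform bound on the \emph{maximal} slope: one shows the rightmost edge of the Newton polygon has slope $<1/k$ by bounding, for each $1\le j\le n$, the total valuation $\sum_{i=0}^{j-1}\nu_p(b_i)$ (the count of multiples of $p$ among $j$ consecutive terms of the progression plus the contribution of higher powers of $p$) against $j/k$, and it is precisely here that the hypotheses $p\ge (k+u-1)d+\al+1$ for $u>0$, the stronger $+2$ for $u=0$, and $d\le 2\al+2$ when $(k,u)=(1,0)$ are consumed. The bound ``all positive slopes $<1/k$'' subsumes your denominator claim (a reduced slope $a/b<1/k$ forces $b>k$), in particular excludes integer slopes, and then either your Dumas-length count or the simpler observation that a degree-$k$ factor's positive-valuation roots would have valuations summing to a positive integer less than $k\cdot(1/k)=1$ finishes the proof. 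Since your proposal explicitly defers exactly this estimate, and the deferral rests on the two incorrect structural claims above, it has a genuine gap at the one step where the lemma's quantitative hypotheses do any work.
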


Let $d=3$. By putting $m=\al +3(n-k)$ and taking $p=P(\Delta (m, 3, k))$, we find
from Lemma \ref{irmain} and Theorem \ref{dD} that $G_{\frac{1}{3}}$ and
$G_{\frac{2}{3}}$ does not have a factor of degree $k\ge 2$ except possibly
when $k=2, \al=2, m=2+3(n-2)=125$. This gives $n=43$ and we use \cite[Lemma 2.13]{stirred}
with $p=2, r=2$ to show that $G_{\frac{2}{3}}$ do not have a factor of degree $2$.
Further except possibly when $m=\al +3(n-1)=2^l$ for positive integers $l$,
$G_{\frac{1}{3}}$ and $G_{\frac{2}{3}}$ do not have a linear factor.
This proves Theorem \ref{1/3}.

Let $d=2$. Let $k=1, u=0$. We have $P(1+2(n-1))\geq 3$ and hence
taking $p=P(1+2(n-1))$ in Lemma \ref{irmain}, we find that $G_{\frac{1}{2}}$ does not
have a factor of degree $1$. Hence from now on, we may suppose that $k\ge 2$ and
$0\leq u\leq k$. For $(m, k)\in \{((5, 2), (7, 2), (9, 4), (13, 5), (17, 6), (15, 7),
(21, 8), (19, 9)\}$, we check that $P(\Delta (m, 2, k))\ge m$. For $0\le u\le k$, by putting
$m=1+2(n+u-k)$, we find from $n\ge 2k$ and Theorem \ref{dD} that
\begin{align*}
P(\Delta (m, 2, k))>2(k+u)=\begin{cases}
\min (2(k+u), 3.5k) & {\rm if} \ u\le 0.5k\\
\min (2(k+u), 4k) & {\rm if} \ 0.5k<u\le k
\end{cases}
\end{align*}
except when $k=2, (u, m)\in \{(1, 25), (2, 25), (2, 243)\}$. Observe that if $p>2(k+u)$,
then $p\geq 2(k+u)+1$. Now we take $p=P(\Delta(m, 2, k))$ in Lemma \ref{irmain} to obtain that
$G_{u+\frac{1}{2}}$ do not have a factor of degree $k$ with $k\ge 2$ except possibly
when $k=2, u=1, n=13$ or $k=2, u=2, n\in \{12, 121\}$.
We use \cite[Lemma 2.13]{stirred} with $(p, r)=(3, 1), (7, 1)$ to show that
$G_{u+\frac{1}{2}}$ do not have a factor of degree $2$ when $(u, n)=(1, 13), (2, 12)$
and $(u, n)=(2, 121)$, respectively.
\qed

\end{document}